\numberwithin{equation}{section}
\newtheorem{theorem}{Theorem}
\newtheorem{proposition}{Proposition}[section]
\newtheorem{lemma}{Lemma}[section]
\theoremstyle{remark}
\newtheorem{remark}{Remark}[section]
\theoremstyle{definition}
\newtheorem{example}{Example}[section]
\newtheorem{definition}{Definition}[section]
\numberwithin{equation}{section}
\newcommand{\R}{\mathbb{R}}
\newcommand{\closure}[1]{\overline{#1}}
\newcommand{\norm}[1]{\left\Vert #1 \right\Vert}
\newcommand{\seminorm}[1]{\left[ #1 \right]}
\newcommand{\abs}[1]{\left\vert #1 \right\vert}
\DeclareMathOperator{\divtmp}{div}
\renewcommand{\div}{\divtmp}
\DeclareMathOperator{\essinf}{ess\,inf}
\newcommand{\st}{\,:\,}
\newcommand{\dx}{\,\mathrm{d}x}
\renewcommand{\d}{\,\mathrm{d}}
\newcommand{\eps}{\varepsilon}
\DeclareMathOperator{\dist}{dist}
\DeclareMathOperator{\Lip}{Lip}
\newcommand{\C}{\mathrm{C}}
\renewcommand{\L}{\mathrm{L}}
\newcommand{\W}{\mathrm{W}}
\newcommand{\grad}{\nabla}
\newcommand{\hess}{\mathrm{D}^2}
\DeclareMathOperator{\diam}{diam}
\newcommand{\Set}[1]{\left\lbrace#1\right\rbrace}
\newcommand{\rev}{\color{magenta}}
\renewcommand{\rev}{}
\newcommand{\nc}{\normalcolor}
\let\blx@rerun@biber\relax
\begin{document}

\title{The convergence rate of $p$-harmonic\\to infinity-harmonic functions}
\author{Leon Bungert\thanks{Institute of Mathematics, University of Würzburg, Emil-Fischer-Str. 40, 97074 Würzburg, Germany. Email: \href{mailto:leon.bungert@uni-wuerzburg.de}{leon.bungert@uni-wuerzburg.de}
} 
}
\maketitle

\begin{abstract}
    The purpose of this paper is to prove a uniform convergence rate of the solutions of the $p$-Laplace equation $\Delta_p u = 0$ with Dirichlet boundary conditions to the solution of the infinity-Laplace equation $\Delta_\infty u = 0$ as $p\to\infty$.
    The rate scales like $p^{-\nicefrac{1}{4}}$ for general solutions of the Dirichlet problem and like $p^{-\nicefrac{1}{2}}$ for solutions with positive gradient.
    An explicit example shows that it cannot be better than $p^{-1}$.
    The proof of this result solely relies on the comparison principle with the fundamental solutions of the $p$-Laplace and the infinity-Laplace equation, respectively.
    Our argument does not use viscosity solutions, is purely metric, and is therefore generalizable to more general settings where a comparison principle with Hölder cones and Hölder regularity is available.
    \\
    \textbf{Keywords:} $p$-Laplacian, infinity-Laplacian, convergence rates, comparison principle
    \\
    \textbf{AMS subject classifications:} 26A16, 35B51, 35D30, 35D40, 35J92, 35J94
\end{abstract}

\section{Introduction}

The infinity-Laplace equation first appeared in \cite{aronsson1967extension} as optimality condition for absolutely minimizing Lipschitz extensions.
There it was formally derived as the limit equation of $p$-Laplacian problems of the form
\begin{align}\label{eq:p-Laplace_equation}
    \begin{dcases}
        \Delta_p u = 0\quad&\text{in }\Omega,\\
        u = g_p\quad&\text{on }\partial\Omega,
    \end{dcases}
\end{align}
where the $p$-Laplace operator of a smooth function $u$ is defined as $\Delta_p u := \div(\abs{\grad u}^{p-2}\grad u)$, $\Omega\subset\R^d$ is a bounded domain, and $g_p:\partial\Omega\to\R$ is some boundary datum.
To derive the infinity-Laplace equation, one expands the $p$-Laplacian as 
\begin{align}
    \Delta_p u = \abs{\grad u}^{p-4}\left(\abs{\grad u}^2\Delta u + (p-2)\langle\grad u,\hess u\,\grad u\rangle\right).
\end{align}
Using the homogeneity of the equation \labelcref{eq:p-Laplace_equation} and carelessly dividing by $\abs{\grad u}$, the right limit equation as $p\to\infty$ turns out to be
\begin{align}\label{eq:inf-Laplace_equation}
    \begin{dcases}
        \Delta_\infty u = 0\quad&\text{in }\Omega,\\
        u = g_\infty\quad&\text{on }\partial\Omega,
    \end{dcases}
\end{align}
where the infinity-Laplace operator is defined as $\Delta_\infty u := \langle\grad u,\hess u\,\grad u\rangle$, and $g_\infty:\partial\Omega\to\R$ is a Lipschitz continuous boundary datum.
One can make this limit rigorous using the framework or viscosity solutions, see, e.g., \cite{aronsson2004tour,lindqvist2016notes}.
Alternatively, one can consider a variational limit of the $p$-Laplacian problems, given by the absolutely minimizing Lipschitz extension problem:
\begin{align}\label{eq:AMLE}
    \begin{dcases}
        \Lip(u;U) = \Lip(u;\partial U)\quad&\forall \text{ subdomains }U \subset \Omega,\\
        u = g_\infty\quad&\text{on }\partial\Omega.
    \end{dcases}
\end{align}
Here $\Lip(u;X) := \sup_{x,y\in X}\frac{\abs{u(x)-u(y)}}{\abs{x-y}}$ is the Lipschitz constant of $u$ on $X\subset\closure\Omega$.
While in Euclidean space \labelcref{eq:inf-Laplace_equation} is equivalent to \labelcref{eq:AMLE} (see \cite{aronsson2004tour}), the latter formulation can be extended to more general settings like graphs \cite{bungert2022uniform,legruyer2007absolutely,roith2022continuum} or length spaces \cite{juutinen2006equivalence,juutinen2002absolutely}, where a PDE like \labelcref{eq:inf-Laplace_equation} might not be available.
Yet another equivalent formulation of \labelcref{eq:inf-Laplace_equation,eq:AMLE} is the ``comparison with cones'' principle \cite{aronsson2004tour}.
It states that a function solves these two problems if and only if it admits a comparison principle with cone functions of the form $x\mapsto a\abs{x-x_0}+b$.
While the necessity of this property is not really surprising (given that all solutions of the $p$-Laplace equation \labelcref{eq:p-Laplace_equation} for $p>d$ also admit a comparison principle with functions of the form $x\mapsto a\abs{x-x_0}^\frac{p-d}{p-1}+b$) the sufficiency is an astonishing feature of the infinity-Laplace equation~\labelcref{eq:inf-Laplace_equation}.

Building upon the theory developed for this equation, an extensive body of literature has formed around other infinity-Laplacian type problems which arise as $p$-Laplacian limits. 
For instance, one should mention the eigenvalue problem \cite{juutinen1999eigenvalue,esposito2015neumann,rossi2016first,bungert2022eigenvalue}, vector-valued problems \cite{sheffield2012vector,katzourakis2012variational}, problems with gradient constraints and limits of $p$-Poisson equations \cite{juutinen2016discontinuous,bhattacharya1989limits,bungert2023inhomogeneous}, and infinity-harmonic potentials \cite{lindgren2021gradient,brustad2023infinity}, where the list of references is far from being complete.

The present paper focuses on the standard infinity-Laplace equation \labelcref{eq:inf-Laplace_equation} and shall answer a question which appears to be entirely unexplored in the literature: 
\begin{center}
    How fast do the solutions of the $p$-Laplace equation \labelcref{eq:p-Laplace_equation} converge to the solution of the infinity-Laplace equation \labelcref{eq:inf-Laplace_equation} as $p\to\infty$?
\end{center}
The answer to this question was already revealed in the \rev abstract \nc and reads: 
At least as fast as $p^{-\nicefrac{1}{4}}$ tends to zero. 
In certain scenarios the rate \rev can \nc be improved to $p^{-\nicefrac{1}{2}}$ but it can never be better than $p^{-1}$, which is the convergence rate of the fundamental solutions to the $p$-Laplace equation to the one of the infinity-Laplace equation.

Most related to the present work are our papers \cite{bungert2022uniform,bungert2022ratio} in collaboration with Jeff Calder and Tim Roith.
There we used the ``comparison with cones'' property of infinity-harmonic functions together with techniques from \cite{smart2010infinity,armstrong2012finite} to prove rates of convergence for infinity-harmonic functions on sparse geometric graphs as the graph approximates a continuum.
Furthermore, in \cite{smart2010infinity,armstrong2012finite} the convergence rate for solutions of the inhomogeneous infinity Laplace equation $-\Delta_\infty u = \gamma$ as $\gamma\searrow 0$ was proven to be at most $\gamma^{\nicefrac{1}{3}}$.

Our key insight in \cite{bungert2022uniform,bungert2022ratio} was that graph infinity-harmonic functions satisfy a comparison principle with respect to functions that are close to a Euclidean cone, which enabled us to prove rates of convergence to an infinity-harmonic function.
In more detail, in \cite{bungert2022uniform} we utilize explicit error estimates between distance functions on a geometric graph and Euclidean cone functions to prove the rates. 
In \cite{bungert2022ratio} we use percolation theory to show that ratios of graph distances on sparse graphs converge to the corresponding ratio of Euclidean distances which is also sufficient for proving rates.
In the present paper, we transfer this line of thought to solutions of the $p$-Laplace equation for large $p$ and prove rates of convergence using explicit error estimates between ratios of Hölder cones $x\mapsto\abs{x}^{\frac{p-d}{p-1}}$ and the ratios of Euclidean (or Lipschitz) cones $x\mapsto\abs{x}$.

The rest of the paper is organized as follows:
In \cref{sec:setting_results} we introduce our notation, recap the concepts of solutions and comparison principles for equations \rev\labelcref{eq:p-Laplace_equation,eq:inf-Laplace_equation}\nc, state our main result, and discuss some extensions to the fractional infinity-Laplacian and equations on metric measure spaces.
\cref{sec:proof} is devoted to its proof: 
In \cref{sec:perturbations} we first recap important connections between infinity-harmonic functions and sub- and supersolutions of a finite difference infinity-Laplacian, as well as some perturbation results. 
In \cref{sec:approximate_consisteny} we prove an approximate sub- and supersolution property for $p$-harmonic functions with large values of~$p$ which is key for proving the rates in \cref{sec:convergence_rates}.

\section{Setting and main result}
\label{sec:setting_results}

\subsection{Notation}

In this paper we assume that $\Omega\subset\R^d$ is an open domain, we do not pose any regularity assumptions on its boundary.
With $\L^p(\Omega)$ and $\W^{1,p}(\Omega)$ for $p\in[1,\infty]$ we denote standard Lebesgue and Sobolev spaces, equipped with the norms $\norm{\cdot}_{\L^p}$ and $\norm{\cdot}_{\W^{1,p}}$, respectively.
Furthermore, for $p\in[1,\infty)$ the space $\W^{1,p}_0(\Omega)$ is defined as the closure of the space of compactly supported smooth functions with respect to the norm $\norm{\cdot}_{\W^{1,p}}$.

The space of continuous functions on $\closure\Omega$ is denoted by $\C(\closure\Omega)$ and equipped with the supremal norm $\norm{u}_\infty := \sup_{\closure\Omega}\abs{u}$. 
With $\C^{0,\alpha}(\closure\Omega)$ for $\alpha\in(0,1]$ we denote the space of $\alpha$-Hölder continuous functions, equipped with the norm $\norm{u}_{{0,\alpha}}:=\norm{u}_\infty + \seminorm{u}_{{0,\alpha}}$, where the Hölder semi-norm is defined as
\begin{align*}
    \seminorm{u}_{{0,\alpha}} := \sup\left\lbrace\frac{\abs{u(x)-u(y)}}{\abs{x-y}^\alpha} \st x,y\in\closure\Omega,\;x\neq y \right\rbrace.
\end{align*}
For $d<p<\infty$ the Sobolev spaces $\W^{1,p}_0(\Omega)$ are continuously embedded in the Hölder spaces $\C^{0,1-\nicefrac{d}{p}}(\closure\Omega)$, see \cite[Theorem 4.12]{adams2003sobolev}.

\subsection{Solutions of the \texorpdfstring{$p$}{p}-Laplace and infinity-Laplace equations}

We continue with defining solutions of the $p$-Laplace equation \labelcref{eq:p-Laplace_equation} and the infinity-Laplace equation \labelcref{eq:inf-Laplace_equation}.
\begin{definition}\label{def:solution_p-Laplace}
Let $p>d$ and $g_p\in\W^{1,p}(\Omega)\cap\C(\closure\Omega)$. 
We say that $u_p\in\W^{1,p}(\Omega)\cap\C(\closure\Omega)$ solves \labelcref{eq:p-Laplace_equation} if $u_p=g_p$ on $\partial\Omega$ and
\begin{align*}
    \int_\Omega\abs{\grad u_p}^p \dx \leq \int_\Omega\abs{\grad v}^p \dx\qquad\forall
    v\in\W^{1,p}(\Omega)\st
    v-g_p\in\W^{1,p}_0(\Omega).
\end{align*}
\end{definition}
\begin{definition}\label{def:solution_inf-Laplace}
Let $g_\infty\in\W^{1,\infty}(\Omega)\cap\C(\closure\Omega)$. 
We say that $u_\infty\in\C(\closure\Omega)$ solves \labelcref{eq:inf-Laplace_equation} if it is a viscosity solution of \labelcref{eq:inf-Laplace_equation}.
\end{definition}
\begin{remark}[Existence and uniqueness]
     Existence and uniqueness of solutions in the above sense are classical results.
     For solutions of the $p$-Laplace equation in the sense of \cref{def:solution_p-Laplace} we refer to \cite[Theorem 2.16, Section 3.1]{lindqvist2019notes}.
     As explained therein, the boundary of the domain can be arbitrarily irregular since we assume that the boundary data are continuous on $\closure\Omega$ and that $p>d$.     
     For an existence proof of solutions to the infinity-Laplace equation, constructed as limits of $p$-Laplacian solutions we refer to \cite[Theorem 4.6]{lindqvist2016notes}.
     Uniqueness was proved with quite different methods in \cite{barles2001existence,jensen1993uniqueness,armstrong2010easy}.
\end{remark}

Our analysis is entirely based upon the comparison principle of $p$-harmonic and infinity-harmonic functions with the respective fundamental solutions.

\begin{proposition}\label{prop:comparison}
    Let $u_p$ solve \labelcref{eq:p-Laplace_equation} for $d<p<\infty$ or \labelcref{eq:inf-Laplace_equation} for $p=\infty$.
    Furthermore, define the function
    \begin{align*}
        d_p(x,y) :=
        \begin{dcases}
            \abs{x-y}^\frac{p-d}{p-1},\quad&\text{if }p<\infty,\\
            \abs{x-y},\quad&\text{if }p=\infty.
        \end{dcases}
    \end{align*}
    Then for all domains $D\Subset\Omega$, compactly contained in $\Omega$, for all $a\geq 0$, and for all $x_0\in\R^d\setminus D$ it holds
    \begin{align*}
        \min_{\xi\in \partial D}
        \left\lbrace
        u(\xi) - a \,d_p(\xi,x_0)
        \right\rbrace
        \leq 
        u(x) - a\,d_p(x,x_0)
        \leq 
        \max_{\xi\in \partial D}
        \left\lbrace
        u(\xi) - a\,d_p(\xi,x_0)
        \right\rbrace
        ,
        \quad
        \forall x \in D.
    \end{align*}
\end{proposition}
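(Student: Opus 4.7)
The plan is to reduce the claim to a direct application of the classical comparison principle for $p$-harmonic (respectively infinity-harmonic) functions against the explicit fundamental solution of the equation. Concretely, for every $a\ge 0$, every $x_0\notin D$, and every constant $c\in\R$, I will argue that $v(x):=a\,d_p(x,x_0)+c$ solves the same PDE as $u$ on $D$, and then choose $c$ so that $v$ agrees with the extremal value of $u-a\,d_p(\cdot,x_0)$ on $\partial D$. The comparison principle on $D$ then transfers the boundary ordering to the interior and delivers both inequalities simultaneously.

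First I would verify that $r\mapsto r^{(p-d)/(p-1)}$ is $p$-harmonic on $\R^d\setminus\{0\}$ and that $r\mapsto r$ is infinity-harmonic on $\R^d\setminus\{0\}$. For $d<p<\infty$ the radial $p$-Laplacian reads $\Delta_p f(r)=(p-1)(f'(r))^{p-2}f''(r)+\tfrac{d-1}{r}(f'(r))^{p-1}$ (with $f'>0$), and inserting the ansatz $f(r)=r^\alpha$ pins down $\alpha=\tfrac{p-d}{p-1}$, which is exactly the exponent appearing in $d_p$. For $p=\infty$, the identity $\Delta_\infty f(|x|)=(f'(|x|))^2 f''(|x|)$ for radial functions shows that the linear cone $f(r)=r$ is infinity-harmonic. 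Because the only singularity of $d_p(\cdot,x_0)$ lies at $x=x_0\notin D$, the function $v=a\,d_p(\cdot,x_0)+c$ is smooth on $D$ and continuous on $\closure D$. Consequently, $v$ qualifies as a weak solution of the $p$-Laplace equation in the sense of \cref{def:solution_p-Laplace} when $d<p<\infty$, and as a viscosity solution of the infinity-Laplace equation in the sense of \cref{def:solution_inf-Laplace} when $p=\infty$.

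Setting $c^{+}:=\max_{\xi\in\partial D}\{u(\xi)-a\,d_p(\xi,x_0)\}$ and $v^{+}:=a\,d_p(\cdot,x_0)+c^{+}$, we have $u\le v^{+}$ on $\partial D$ by construction, while both $u$ and $v^{+}$ solve the same PDE in $D$. Invoking the classical comparison principle---the weak comparison principle for $p$-harmonic functions from \cite[Chapter 2]{lindqvist2019notes} when $d<p<\infty$, and the viscosity comparison principle of \cite{jensen1993uniqueness,barles2001existence,armstrong2010easy} when $p=\infty$---extends this ordering to all of $D$, which is precisely the upper inequality $u(x)-a\,d_p(x,x_0)\le c^{+}$. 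The lower inequality follows from the symmetric argument with $c^{-}:=\min_{\xi\in\partial D}\{u(\xi)-a\,d_p(\xi,x_0)\}$. I do not anticipate a genuine obstacle: the only nontrivial ingredient is selecting the correct notion of comparison principle for each regime, and both variants are classical and already cited in the paper. The one mildly delicate book-keeping point is the possibility that $x_0\in\partial D$ rather than strictly exterior to $\closure D$, but since $D$ is open and both $u$ and $d_p(\cdot,x_0)$ are continuous on $\closure D$, the boundary extrema are attained and the comparison on $D$ goes through verbatim.
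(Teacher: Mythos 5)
Your proposal is correct and matches the standard argument underlying the references that the paper cites in lieu of a proof: verify that $a\,d_p(\cdot,x_0)+c$ is $p$-harmonic (resp.\ infinity-harmonic) away from $x_0$, then invoke the classical weak (resp.\ viscosity) comparison principle on $D$ after choosing $c$ to match the boundary extremum. The radial computation pinning down the exponent $\frac{p-d}{p-1}$, the use of homogeneity to absorb the factor $a\ge 0$, and the handling of the case $x_0\in\partial D$ (where one should additionally note that $|\nabla d_p(\cdot,x_0)|^p\sim|x-x_0|^{p(1-d)/(p-1)}$ remains integrable precisely because $p>d$, so $v\in\W^{1,p}(D)$ and the weak comparison applies) are all consistent with what the cited sources do; the paper itself simply refers the reader to \cite{lindqvist1986definition} and \cite[Proposition 6.2]{lindqvist2016notes}.
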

\begin{proof}
    See \cite{lindqvist1986definition} for $p<\infty$ and \cite[Proposition 6.2]{lindqvist2016notes} or \cite{aronsson2004tour} for $p=\infty$.
\end{proof}
\begin{remark}
    The astonishing property of infinity-harmonic functions is that they are characterized through the comparison principle from \cref{prop:comparison}, see \cite{aronsson2004tour}. 
    This is not the case for $p$-harmonic functions, however, the comparison principle alone turns out to be enough.
\end{remark}

\subsection{Main result}
\label{sec:main_result}

The following theorem is our main result and provides a convergence rate which depends on the Hölder-regularity of the $p$-harmonic functions $u_p$ (which, a-priori, are at least $1-\nicefrac{d}{p}$-Hölder continuous).
\begin{theorem}[Explicit convergence rate]\label{thm:explicit_rate}
Let $u_p \in \W^{1,p}(\Omega)$ solve \labelcref{eq:p-Laplace_equation} for $p>d$ and $u_\infty \in \W^{1,\infty}(\Omega)$ solve \labelcref{eq:inf-Laplace_equation}.
Assume that $u_p \in \C^{0,\alpha_p}(\closure\Omega)$ for some $\alpha_p \in \left[1-\nicefrac{d}{p},1\right]$, and that 
\begin{align*}
    \mathsf H := \limsup_{p\to\infty}\seminorm{u_p}_{{0,\alpha_p}}
    <\infty.
\end{align*}
Then there exists a constant $C(\Omega,\mathsf H, \norm{u_\infty}_{{0,1}})\in(0,\infty)$ such that for all $p>d$ sufficiently large it holds that
\begin{align*}
    \norm{u_p - u_\infty}_{\infty}
    \leq 
    C(\Omega,\mathsf H, \norm{u_\infty}_{{0,1}})
    \left(
    \frac{d-1}{p-1}
    \right)^\frac{\alpha_p}{2\alpha_p+2}
    +
    \max_{\partial\Omega}\abs{g_p - g_\infty}.
\end{align*}
If $\essinf_\Omega\abs{\grad u_\infty}=:\gamma>0$, then this can be improved to
\begin{align*}
    \norm{u_p - u_\infty}_\infty
    \leq  
    \frac{C(\Omega,\mathsf H,\norm{u_\infty}_{{0,1}})}{\gamma^2}
    \left(\frac{d-1}{p-1}\right)^\frac{\alpha_p}{2}
    +
    \max_{\partial\Omega}\abs{g_p - g_\infty}.
\end{align*}
\end{theorem}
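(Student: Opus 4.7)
The plan is to use \cref{prop:comparison} to recast $u_p$ as an \emph{approximate} infinity-harmonic function in a quantitative finite-difference sense, and then transfer this property into an $L^\infty$-bound against $u_\infty$ via a stability argument in the spirit of Armstrong--Smart. No viscosity-solution machinery is needed; the only analytic input is the comparison principle stated in \cref{prop:comparison}.

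First I would quantify how close the Hölder cones appearing in \cref{prop:comparison} are to ordinary Euclidean cones. Writing $\beta_p \defeq (p-d)/(p-1) = 1 - \delta_p$ with $\delta_p \defeq (d-1)/(p-1)$, the elementary convexity estimate for $t \mapsto t^{\beta_p}$ gives $|t^{\beta_p} - t| \leq C(\Omega)\,\delta_p$ uniformly for $t \in [0, 2\diam\Omega]$. Consequently the Hölder cones $d_p(\cdot,x_0)$ converge uniformly to the Euclidean cones at rate $\delta_p$, and the same holds for their ratios on comparable scales.

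Next, and this is the main technical step, I would fix $x \in \Omega$ and $r > 0$ with $B_r(x) \Subset \Omega$, let $\xi^\pm \in \closure{B_r(x)}$ realize the extrema of $u_p$ on that ball, and apply \cref{prop:comparison} on $D = B_r(x)$ with a carefully chosen exterior center $x_0$ (antipodal to $\xi^+$ across $x$, say) and slope $a \geq 0$ tuned so that the Euclidean cone of slope $a$ matches $u_p$ at $\xi^+$. The Hölder bound $\seminorm{u_p}_{0,\alpha_p} \leq \mathsf H$ forces $a \lesssim \mathsf H\, r^{\alpha_p - 1}$, and Step~1 then lets one replace the Hölder cone by its Euclidean counterpart at an additive cost of $a \cdot C(\Omega)\delta_p \lesssim \mathsf H\, r^{\alpha_p - 1}\delta_p$. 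Summing both inequalities of \cref{prop:comparison} yields the approximate mean-value identity
\begin{equation*}
\left| \max_{\closure{B_r(x)}} u_p + \min_{\closure{B_r(x)}} u_p - 2 u_p(x) \right| \leq C(\Omega, \mathsf H)\, r^{\alpha_p}\, \delta_p,
\end{equation*}
i.e.\ $u_p$ satisfies the defining identity of infinity-harmonicity (à la Le Gruyer) up to an error that is small at every scale.

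Finally, I would invoke the stability result for infinity-harmonic functions to be developed in \cref{sec:perturbations}: any continuous $v$ satisfying an approximate mean-value property with error $\eta(r)$ at each scale $r$ and agreeing with $u_\infty$ on $\partial\Omega$ up to $\varepsilon$ satisfies $\norm{v - u_\infty}_\infty \leq \varepsilon$ plus a quantity balancing a Lipschitz loss of order $r$ (arising from the exact comparison of $u_\infty$ with Euclidean cones) against the error $\eta(r)$ at scale $r$. Substituting $\eta(r) = C(\Omega, \mathsf H)\, r^{\alpha_p}\, \delta_p$ and optimizing in $r$ (optimal scale $r^* \sim \delta_p^{1/(2\alpha_p+2)}$) yields the stated rate $\delta_p^{\alpha_p/(2\alpha_p + 2)}$ plus the boundary discrepancy $\max_{\partial\Omega}|g_p - g_\infty|$. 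Under the additional hypothesis $\essinf|\grad u_\infty| \geq \gamma > 0$, the exact Euclidean-cone comparison for $u_\infty$ is non-degenerate (no $r$-loss from flat regions), so the stability bound becomes essentially linear in $\eta$ with a prefactor $\gamma^{-2}$; no $r$-balancing is then needed, producing the sharpened rate $\gamma^{-2}\delta_p^{\alpha_p/2}$. The main obstacle is the mean-value estimate above: coaxing the exact cone comparison into an error that scales \emph{multiplicatively} as $r^{\alpha_p}\delta_p$ rather than just as $\delta_p$, which hinges on the joint choice of $x_0$ and $a$ so that the Hölder/Lipschitz cone discrepancy of Step~1 couples with the local Hölder oscillation of $u_p$ rather than with a generic constant depending on the global oscillation.
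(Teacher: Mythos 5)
Your high-level strategy is the same as the paper's: use \cref{prop:comparison} to exhibit $u_p$ as an approximately infinity-harmonic function in a finite-difference sense, measured through a quantitative version of the mean-value (or ``comparison with cones'') defect, and then invoke a stability argument that converts this into an $\L^\infty$-bound against $u_\infty$. You also identify correctly that the exponent $\tfrac{\alpha_p}{2\alpha_p+2}$ arises from balancing a Hölder-loss term against the consistency error and that the improvement to $\tfrac{\alpha_p}{2}$ under $\essinf\abs{\grad u_\infty}>0$ comes from avoiding a perturbation step. However, two of the steps are stated in a form that would not survive being written out in detail.

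First, your approximate mean-value estimate
\begin{align*}
\Bigl| \max_{\closure{B_r(x)}}u_p + \min_{\closure{B_r(x)}}u_p - 2u_p(x) \Bigr| \leq C(\Omega,\mathsf H)\,r^{\alpha_p}\delta_p
\end{align*}
is phrased for $u_p$ itself, not for its upper and lower envelopes $u_p^\eps,(u_p)_\eps$. The paper's \cref{prop:approx_consistency} proves one-sided bounds on $-\Delta_\infty^\eps u_p^\eps$ and $-\Delta_\infty^\eps (u_p)_\eps$ — i.e.\ on $2u_p^\eps(x)-u_p^{2\eps}(x)-u_p(x)$ — precisely because the cone comparison applied on the punctured ball with vertex at $x$ only controls the oscillation between two successive scales, not the symmetry of $\max$ and $\min$ at a single scale. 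Your construction (vertex $x_0$ antipodal to $\xi^+$ across $x$ and slope $a$ tuned to match $u_p$ at $\xi^+$) is plausible-sounding, but you do not show that it actually produces a two-sided bound, and the paper's finite-difference comparison principle \cref{prop:nonlocal_max_princ} is stated for monotone operations on envelopes; you would still need those envelopes to run the next step. This is not a cosmetic change — you should derive the envelope version as in the paper, or else verify that your direct version implies it.

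Second, and more importantly, your description of the stability argument misses the mechanism that produces the exponent. You write that the stability result balances ``a Lipschitz loss of order $r$ \ldots against the error $\eta(r)$ at scale $r$.'' With $\eta(r)\sim r^{\alpha_p}\delta_p$ these two quantities never balance (for $\alpha_p\le1$ and $\delta_p<1$ one has $\eta(r)<r$ for all $r$, so the naive infimum is at $r\to0$). The actual balance is between the Hölder loss $r^{\alpha_p}$ and the \emph{cube root} of the normalised finite-difference defect, $\bigl(\eta(r)/r^2\bigr)^{1/3} \sim (r^{\alpha_p-2}\delta_p)^{1/3}$. The cube root is produced by the perturbation step (\cref{lem:perturb_pos_grad,lem:perturb_strict}): one first raises the lower slope to $\delta$ at a cost $O(\delta)$, then perturbs $v\mapsto v-\delta v^2$ to turn $-\Delta_\infty^\eps v\ge0$ into $-\Delta_\infty^\eps w\ge\delta(S_-^\eps v)^2\ge\delta^3$ at cost $O(\delta)$, and then $\delta$ is chosen so that $\delta^3$ beats the consistency error. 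You landed on the correct $r^*\sim\delta_p^{1/(2\alpha_p+2)}$ and the correct final rate, but nothing in your write-up accounts for the cube root; as stated, your optimisation step is unjustified and would not reproduce those exponents. The same gap shows in the positive-gradient case: you say ``no $r$-balancing is then needed,'' but the paper still balances $\eps^{\alpha_p} = \eps^{\alpha_p-2}\bigl(2^{-\beta}-\tfrac12\bigr)$; what the hypothesis $\gamma>0$ buys is that $S_-^\eps(u_\infty)_\eps\ge\gamma$ holds a priori, so \cref{lem:perturb_pos_grad} and the associated cube root can be skipped, and the defect enters linearly rather than as a cube root — but one must still optimise in $\eps$.

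In short: the skeleton of the argument is right, and so is the final answer, but the key quantitative mechanism (perturb-to-strict-supersolution, giving a cubic gain) is absent, and without it the optimisation in $r$ does not produce the claimed exponents.
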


A couple of remarks on \cref{thm:explicit_rate} are in order.
\begin{remark}[Boundary term]\label{rem:boundary_term}
Since we are considering uniform convergence rates, measures through the supremal norm on $\closure\Omega$, the rate has to be dominated by the convergence rate of the boundary data, which explains the term $\max_{\partial\Omega}\abs{g_p-g_\infty}$.
Hence, the only interesting case is that the boundary data coincide, or converge quicker than the first term, such that the boundary term has no bearing. 
\end{remark}
\begin{remark}[Asymptotic rate]\label{rem:asymptotic_rate}
Let us now assume $g_p=g_\infty$ for simplicity.
Since $\alpha_p\to 1$ as $p\to\infty$, for large values of $p$ the convergence rates all behave like the rate for Lipschitz regularity with $\alpha_p=1$.
It holds
\begin{align*}
    \limsup_{p\to\infty}\norm{u_p-u_\infty}_\infty p^\frac{1}{4}
    < \infty
\end{align*}
and if $\essinf_\Omega\abs{\grad u_\infty}>0$ then 
\begin{align*}
    \limsup_{p\to\infty}\norm{u_p-u_\infty}_\infty p^\frac{1}{2}
    < \infty.
\end{align*}
This justifies the claim from the abstract of this paper which states that the convergence rate scales like $p^{-\nicefrac{1}{4}}$, respectively $p^{-\nicefrac{1}{2}}$ in the second case.
\end{remark}
\begin{remark}[The assumption $\mathsf H < \infty$]\label{rem:limsup_H}
Without any prior knowledge on the Hölder regularity of $u_p$, besides the trivial $\C^{0,1-\nicefrac{d}{p}}$ regularity, one can apply Morrey's inequality for $\alpha_p=1-\nicefrac{d}{p}$ to $v_p := u_p - g_p \in \W^{1,p}_0(\Omega)$ (see \cite[Lemma 2.3]{lindqvist2016notes}) to get
\begin{align*}
    \seminorm{v_p}_{{0,\alpha_p}}
    \leq 
    \frac{2pd}{p-d}
    \norm{\grad v_p}_{\L^p}.
\end{align*}
Taking the $\limsup$ as $p\to\infty$ we obtain, using also \cref{def:solution_p-Laplace}, that
\begin{align*}
    \limsup_{p\to\infty}\seminorm{v_p}_{{0,\alpha_p}}
    &\leq 
    2\limsup_{p\to\infty}
    \norm{\grad v_p}_{\L^p}
    \leq 
    2\limsup_{p\to\infty}
    \left(
    \norm{\grad u_p}_{\L^p}
    +
    \norm{\grad g_p}_{\L^p}
    \right)
    \\
    &\leq 
    4
    \limsup_{p\to\infty}
    \norm{\grad g_p}_{\L^p}.
\end{align*}
This implies that
\begin{align*}
    \mathsf H = 
    \limsup_{p\to\infty}\seminorm{u_p}_{{0,\alpha_p}}
    \leq 
    \limsup_{p\to\infty}
    \left(
    \seminorm{v_p}_{{0,\alpha_p}}
    +
    \seminorm{g_p}_{{0,\alpha_p}}
    \right)
    \leq 
    \limsup_{p\to\infty}
    \left(
    4\norm{\grad g_p}_{\L^p}
    +
    \seminorm{g_p}_{{0,\alpha_p}}
    \right),
\end{align*}
meaning that a uniform bound on semi-norms of the boundary data imply a uniform bound on the Hölder norms of $u_p$:
\begin{align*}
    \limsup_{p\to\infty}\norm{\grad g_p}_{\L^p} + \seminorm{g_p}_{{0,\alpha_p}}<\infty
    \implies 
    \mathsf H < \infty.
\end{align*}
\end{remark}
\begin{remark}[Parameter dependent rate]\label{rem:general_rate}
\cref{thm:explicit_rate} is a straightforward consequence of the more general statement \cref{thm:general_cvgc} further down in \cref{sec:convergence_rates} which asserts a convergence rate, depending on a free parameter $\eps>0$. 
Optimizing over this parameter leads to the explicit expressions in \cref{thm:explicit_rate}.
\end{remark}

\begin{example}[Lower bound]
There is no reason to assume that our rates are sharp. 
Still, the following example shows that the rate cannot be better than $\nicefrac{1}{p}$, \rev not even locally\nc.
To see this one considers the functions $u_p(x) = \abs{x}^\frac{p-d}{p-1}$ which solve \labelcref{eq:p-Laplace_equation} on the punctured ball $\Omega := \{x\in\R^d\st 0<\abs{x}<1\}$ with boundary values $g_p=1$ on the unit sphere and $g_p=0$ on the center. 
The solution of \labelcref{eq:inf-Laplace_equation} with the same boundary data $g_\infty=g_p$ is given by $u_\infty(x)=\abs{x}$.
Since this is a radial problem we have
\begin{align*}
    \norm{u_p - u_\infty}_\infty 
    =
    \max_{t\in[0,1]} t^\frac{p-d}{p-1} - t.
\end{align*}
Defining $\beta:=\tfrac{p-d}{p-1}\in(0,1)$ and $\phi(t) := t^\beta-t$ we see that $\phi(0)=\phi(1)=0$ and $\phi(t)>0$ for $t\in(0,1)$. 
Hence, the maximum is attained in the interior.
We observe that $\phi'(t) = 0$ is equivalent to $t = \beta^\frac{1}{1-\beta}$.
Plugging this into $\phi$ we get that
\begin{align*}
    \norm{u_p - u_\infty}_\infty 
    =
    \beta^\frac{\beta}{1-\beta} - \beta^\frac{1}{1-\beta}
    =
    \beta^\frac{1}{1-\beta}\left(\frac{1}{\beta}-1\right).
\end{align*}    
Using $\beta = \tfrac{p-d}{p-1} = 1-\tfrac{d-1}{p-1}$ we obtain
\begin{align*}
    \norm{u_p - u_\infty}_\infty 
    &=
    \left(\frac{p-d}{p-1}\right)^\frac{p-1}{d-1}
    \left(
    \frac{p-1}{p-d}-1
    \right)
    =
    \left(\frac{p-d}{p-1}\right)^\frac{p-1}{d-1}
    \frac{d-1}{p-d}
    \simeq \frac{1}{p},
\end{align*}    
meaning that in this example the rate is $\nicefrac{1}{p}$. 
This is better than $p^{-\nicefrac{1}{2}}$ which is guaranteed by \cref{thm:explicit_rate}.

\rev 
The lower bound of $\nicefrac{1}{p}$ cannot be improved, even if one considers local convergence rates. 
To see this, one can simply observe that
\begin{align*}
\abs{u_p\left(\frac{1}{2},0,\dots,0\right) - u_\infty\left(\frac{1}{2},0,\dots,0\right)} = \frac{1}{2^\beta} - \frac{1}{2} \geq \frac{\ln 2}{2}(1-\beta) = \frac{\ln 2}{2}\frac{d-1}{p-1} \sim \frac{1}{p}.
\end{align*}
Hence, for any smooth domain $\Omega'$ which is compactly contained in $\Omega$ and contains the point $(\nicefrac{1}{2},0,\dots,0)$ one will have $\norm{u_p-u_\infty}_{L^\infty(\Omega')}\sim \nicefrac{1}{p}$.
\nc
Judging from this example one might conjecture that the rate (at least for functions with positive gradient) is $\nicefrac{1}{p}$ in general.
\end{example}
\rev 
\begin{remark}
To obtain lower bounds for functions with vanishing gradient, which are expected to be worse than $\nicefrac{1}{p}$ a promising route might be to work with the $p$- and infinity-harmonic functions which were implicitly constructed in \cite{aronsson1984certain,aronsson1986construction}.
\end{remark}
\nc

\subsection{Extensions of our main result}
\label{sec:extension}

Let us discuss some extensions of our results.

We first note that the proof of \cref{thm:explicit_rate} in \cref{sec:proof} relies on only two properties of $p$-harmonic functions: comparison with Hölder cones and Hölder regularity. 
Therefore, it is straightforward to extend our result to other classes of PDE solutions which exhibit these properties, and we give two examples in the sequel.
Instead of $p$-harmonic functions, one could be interested in \rev harmonic function associated to \nc the $s$-fractional infinity-Laplacian, introduced in \cite{bjorland2012nonlocal} (see also \cite{del2022asymptotic,del2022evolution}).
For bounded and smooth functions $u:\R^d\to\R$ and $s\in(\tfrac12,1)$ it is defined as
\begin{align*}
    (\Delta_\infty)^s u(x) := 
    \int_0^\infty\frac{u(x+\eta\,v)-u(x-\eta\,v)-2u(x)}{\eta^{1+2s}}\d\eta,\qquad\text{where }v:=\frac{\grad u(x)}{\abs{\grad u(x)}}.
\end{align*}
For $\grad u(x)=0$ the definition has to be modified.
Solutions $u_s$ of the corresponding equations 
\begin{align}
    \begin{dcases}
        (\Delta_\infty)^s u = 0\quad&\text{in }\Omega,\\
        u = g_\infty\quad&\text{in }\R^d\setminus\Omega,
    \end{dcases}
\end{align}
satisfy a comparison principle with the Hölder cones $\abs{x}^{2s-1}$ and are $2s-1$-Hölder continuous if the boundary data $g_\infty$ are sufficiently regular \cite[Section 3]{bjorland2012nonlocal}.
Therefore, in the proof of \cref{thm:explicit_rate} in \cref{sec:convergence_rates} we can just plug $\alpha=\beta := 2s-1 \in (0,1)$ in \labelcref{eq:eps_in_terms_of_p} in order to obtain the rate
\begin{align*}
    \norm{u_s-u_\infty}_\infty
    \leq
    C(\Omega,\mathsf H,\norm{u_\infty}_{{0,1}})
    \left(1-s\right)^\frac{2s-1}{4s},
\end{align*}
where, as before, $u_\infty$ denotes the solution of \labelcref{eq:inf-Laplace_equation}.
Similarly, as for the $p$-Laplacian approximation discussed in \cref{sec:main_result} the rate asymptotically scales like $\left(1-s\right)^{\nicefrac14}$.
Similar arguments can be performed for the so-called Hölder infinity Laplacian equation
from \cite{chambolle2012holder} the solutions of which also comparison with Hölder cones as well as Hölder regularity.

Another extension of our results, albeit a less obvious one, concerns the convergence rate of $p$-harmonic functions on a metric measure space $(\Omega,d,\mu)$.
Such functions are defined as minimizers of a $p$-Dirichlet energy involving upper gradients, and admit a comparison principle with respect to generalized Green functions.
In the literature, the behavior of such Green functions has mainly been investigated through capacitary estimates in the singular case (which corresponds to $p\leq d$ in the Euclidean setting), see, for instance, \cite{holopainen2002singular,bjorn2020existence,bjorn2021volume}.
Although the non-singular case for large values of $p$ should be easier to treat, to extend our results one would need to prove the existence of a $p$-superharmonic Green function $u_p(x,x_0)$ with $u_p(x,x_0) = 0$ which admits the following ratio convergence
\begin{align*}
    \frac{\sup_{x\in B(x_0,r)}u_p(x,x_0)}{\inf_{x\not\in B(x_0,2r)}u_p(x,x_0)}
    -\frac{1}{2}
    \to 0\qquad\text{as }p\to\infty,\,r\to 0,
\end{align*}
including quantitative estimates for the convergence above in terms of $p$, the Ahlfors dimension of the measure, etc.

\section{Proof of the main result}
\label{sec:proof}

For $x\in\R^d$ and $\eps>0$ we let $B(x;\eps):=\Set{y\in\R^d\st\abs{x-y}\leq\eps}$ denote the \emph{closed ball} of radius $\eps$ around $x$.
Furthermore, we let $\Omega_\eps := \Set{x\in\Omega\st \inf_{y\in\partial\Omega}\abs{x-y}>\eps}$ denote the inner parallel set of $\Omega$ with distance $\eps$ to the boundary.

In the following we let $u:\closure\Omega\to\R$ be a function.
We define upper and lower envelopes of $u$ as
\begin{align}\label{eq:envelopes}
        u^\eps(x) := \sup_{y\in B(x;\eps)}u(y),
        \qquad
        u_\eps(x) := \inf_{y\in B(x;\eps)}u(y),
        \quad x\in \Omega_\eps.
\end{align}
We also define the upper and lower $\eps$-slopes of $u$ as
\begin{align}\label{eq:lower_slope}
    S_+^\eps u(x) := \frac{u^\eps(x)-u(x)}{\eps},
    \qquad
    S_-^\eps u(x) := \frac{u(x)-u_\eps(x)}{\eps},\qquad x\in\Omega_\eps.
\end{align}
A central tool for our techniques is the nonlocal (or finite difference) infinity-Laplacian which is defined as
\begin{align}\label{eq:nonlocal_inf-Laplace}
    \Delta_\infty^\eps u(x) 
    := 
    \frac{S_+^\eps u(x)
    -
    S_-^\eps u(x)}{\eps}
\end{align}
and can be expressed as
\begin{align*}
    \Delta_\infty^\eps u(x)
    =
    \frac{1}{\eps^2}
    \left[
    \sup_{y\in B(x;\eps)}
    \left(u(y)-u(x)\right)
    +
    \inf_{y\in B(x;\eps)}
    \left(u(y)-u(x)\right)
    \right],\quad x\in\Omega_\eps.
\end{align*}
From this expression it can be seen that for smooth functions $u$ formally $\Delta_\infty^\eps u$ is consistent with \rev the normalized infinity Laplacian $\tfrac{\langle \grad u,\hess u \, \grad u\rangle}{\abs{\grad u}^2}$ \nc as $\eps\to 0$.

\rev
We start by giving a brief outline of the proof strategy which relies on convenient properties of the nonlocal infinity-Laplacian $\Delta_\infty^\eps$ as well as the comparison principle for $p$-harmonic functions:
\begin{enumerate}
	\item Use established perturbation arguments to find $w$ which satisfies
	\begin{align*}
		-\Delta_\infty^\eps w \geq \delta^3,\qquad\abs{w-(u_\infty)_\eps}\lesssim\delta\qquad\text{in }\Omega_\eps.
	\end{align*}
	\item Use comparison with Hölder cones to prove that 
	\begin{align*}
		-\Delta_\infty^\eps u_p^\eps \leq \delta(\eps,p)^3
	\end{align*}
	for some $\delta=\delta(\eps,p)>0$ which also depends on the dimension $d$ and the Hölder constant and exponent of $u_p$.
	\item Use step 1., step 2., an elementary comparison principle for $\Delta_\infty^\eps$, and flip the signs to conclude that
	\begin{align*}
		\sup_{\closure{\Omega}_\eps}\abs{u_p^\eps-(u_\infty)_\eps} \lesssim \delta(\eps,p) + \sup_{\closure{\Omega}_\eps\setminus\closure{\Omega}_{2\eps}}\abs{u_p^\eps-(u_\infty)_\eps}.
	\end{align*}
	Use $\alpha$-Hölder continuity to get
	\begin{align*}
		\norm{u_p-u_\infty}_\infty \lesssim \eps^\alpha + \delta(\eps,p) + \norm{g_p-g_\infty}_\infty.
	\end{align*}
	\item Optimize over $\eps$ to obtain the final rate in terms of $p$.
\end{enumerate}

\begin{remark}[Alternative proof technique]
We are thankful for a reviewer pointing out that there might exist an alternative approach to proving convergence rates, based on the theory of viscosity solutions. 
Such an approach would rely on perturbation statements for the usual infinity-Laplacian $\Delta_\infty$ (see \cref{rem:perturb_1,rem:perturb_2} below) as well as the doubling-of-variables technique.
Since we believe that our approach generalizes easier to scenarios as in \cref{sec:extension} and is, furthermore, more elementary, we refrain from going into more detail here.
\end{remark}

\nc

\subsection{Perturbations of infinity-harmonic functions}
\label{sec:perturbations}

In this section we recall some important results which connect infinity-harmonic functions with sub- and supersolutions of the operator $\Delta_\infty^\eps$.
All results in this section were, to the best of our knowledge, first proved in the PhD thesis \cite{smart2010infinity} and picked up later, inter alia, in \cite{armstrong2010easy,armstrong2012finite,bungert2022uniform}.

We first recap the astonishing property of infinity-harmonic functions that their upper and lower envelopes are sub- or supersolutions associated to the operator $\Delta_\infty^\eps$ defined in \labelcref{eq:nonlocal_inf-Laplace}.
The proof is almost trivial and solely relies on the comparison with cones property of infinity-harmonic functions.

\begin{proposition}\label{lem:max-ball}
Let $u \in \C(\closure\Omega)$ solve \labelcref{eq:inf-Laplace_equation}
Then it holds for all $\eps>0$ that
\begin{align}
    -\Delta_\infty^\eps u^\eps \leq 
    0 \leq 
    -\Delta_\infty^\eps u_\eps\qquad \text{in }\Omega_{2\eps}.
\end{align}
\end{proposition}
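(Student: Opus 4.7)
The plan is to prove only the first inequality $-\Delta_\infty^\eps u^\eps \leq 0$; the second inequality $-\Delta_\infty^\eps u_\eps \geq 0$ then follows by applying the first one to $-u$ (which is also infinity-harmonic) together with the identity $(-u)^\eps = -u_\eps$. Unwinding the definition in \labelcref{eq:nonlocal_inf-Laplace}, the claim at any $x \in \Omega_{2\eps}$ reduces to the pointwise inequality
\[
\sup_{B(x;\eps)} u^\eps \;+\; \inf_{B(x;\eps)} u^\eps \;\geq\; 2\,u^\eps(x).
\]

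First I would simplify the two envelope terms. Iterating the supremum and using the triangle inequality on the closed balls gives $\bigcup_{z \in B(x;\eps)} B(z;\eps) = B(x;2\eps)$, so $\sup_{z \in B(x;\eps)} u^\eps(z) = \sup_{B(x;2\eps)} u = u^{2\eps}(x)$. For the minimum, since $x \in B(z;\eps)$ whenever $z \in B(x;\eps)$, one has $u^\eps(z) \geq u(x)$ pointwise, hence $\inf_{B(x;\eps)} u^\eps \geq u(x)$. It therefore suffices to prove the \emph{slope-monotonicity} inequality
\[
u^{2\eps}(x) + u(x) \;\geq\; 2\,u^\eps(x),
\]
which is equivalent to saying the map $r \mapsto (u^r(x) - u(x))/r$ is nondecreasing at $r = \eps$.

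This is where Proposition~\labelcref{prop:comparison} at $p = \infty$ enters. I would apply comparison with cones on the punctured ball $D := B(x;2\eps) \setminus \{x\} \Subset \Omega$, with vertex $x_0 = x$ and slope $a := (u^{2\eps}(x) - u(x))/(2\eps) \geq 0$. On the outer boundary $\partial B(x;2\eps)$ the shifted function satisfies $u(\xi) - a|\xi - x| \leq u^{2\eps}(x) - 2a\eps = u(x)$, and at the isolated inner boundary point $\{x\}$ the value is $u(x)$ itself, so $\max_{\partial D}(u(\xi) - a|\xi - x|) \leq u(x)$. The upper comparison then yields $u(y) \leq u(x) + a|y - x|$ for every $y \in B(x;2\eps)$, and taking the supremum over $y \in B(x;\eps)$ gives $u^\eps(x) \leq u(x) + a\eps$, which rearranges exactly to the slope-monotonicity inequality.

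The main conceptual point is recognizing that this proposition is really a statement about the monotonicity of the cone slope $r \mapsto (u^r(x) - u(x))/r$, and that comparison with Euclidean cones (applied on a punctured ball, so that the vertex contributes to $\partial D$) is precisely the tool that delivers this monotonicity. Beyond that identification, the argument is a direct manipulation of the sup/inf envelopes and a single application of Proposition~\labelcref{prop:comparison}, so I do not expect a genuine technical obstacle.
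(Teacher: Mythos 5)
Your proof is correct and is essentially the standard comparison-with-cones argument that the paper cites (Smart's thesis, Armstrong--Smart) rather than reproving; it is also the same argument the paper itself gives for the approximate version, \cref{prop:approx_consistency}, where the same reduction $-\eps^2\Delta_\infty^\eps u^\eps(x)\leq 2u^\eps(x)-u^{2\eps}(x)-u(x)$ appears and a cone is compared on the punctured ball $B(x;2\eps)\setminus(\{x\}\cup\partial B(x;2\eps))$. The only cosmetic slip is that with the paper's convention that $B(x;2\eps)$ is the \emph{closed} ball, your $D=B(x;2\eps)\setminus\{x\}$ is not open; you clearly intend the open punctured ball (your boundary analysis treats $\partial D=\partial B(x;2\eps)\cup\{x\}$), which is the domain the paper also uses, so the argument is unaffected.
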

\begin{proof}
The statement can be found in \cite[Theorem 2.2.3]{smart2010infinity} or \cite[Lemma 5]{armstrong2010easy}.
\end{proof}
The convenient property of the operator $\Delta_\infty^\eps$ is that it admits a comparison principle of the following form:
\begin{proposition}\label{prop:nonlocal_max_princ}
Assume that for a constant $C\geq 0$ the functions $u,v:\closure\Omega_\eps\to\R$ satisfy
\begin{align}
-\Delta_\infty^\eps u \leq C \leq -\Delta_\infty^\eps v
\end{align}
in $\Omega_{2\eps}$.
Then it holds
\begin{align}
    \sup_{\closure\Omega_\eps}(u-v) = \sup_{\closure\Omega_\eps\setminus\Omega_{2\eps}}(u-v).
\end{align}
\end{proposition}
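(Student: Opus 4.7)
The plan is to argue by contradiction using an interior maximum principle for the nonlocal operator. Assume $M := \sup_{\closure\Omega_\eps}(u-v)$ is strictly larger than $M' := \sup_{\closure\Omega_\eps \setminus \Omega_{2\eps}}(u-v)$, so the maximum of $w := u - v$ is attained only at points of $\Omega_{2\eps}$. Let $K$ denote the (non-empty, closed) level set $\Set{x \in \closure\Omega_\eps \st w(x) = M}$, which lies in $\Omega_{2\eps}$ by hypothesis. I will first show a pointwise operator identity at any $x_0 \in K$, then use it to propagate $K$ outward until it must hit the boundary layer $\closure\Omega_\eps \setminus \Omega_{2\eps}$.

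For the pointwise step, fix $x_0 \in K$. Since $x_0 \in \Omega_{2\eps}$, the ball $B(x_0;\eps)$ lies in $\closure\Omega_\eps$, so $w(y) \leq w(x_0)$ for every $y \in B(x_0;\eps)$. Rearranging to $u(y)-u(x_0) \leq v(y)-v(x_0)$ and taking sup and inf over $y$ gives
\begin{align*}
    a \leq a', \qquad b \leq b',
\end{align*}
where $a,a'$ (resp.\ $b,b'$) are the sups (resp.\ infs) of $u(\cdot)-u(x_0)$ and $v(\cdot)-v(x_0)$ over $B(x_0;\eps)$. From the defining formula of $\Delta_\infty^\eps$ this yields $-\Delta_\infty^\eps u(x_0) \geq -\Delta_\infty^\eps v(x_0)$, and combining with the hypothesis $-\Delta_\infty^\eps u \leq C \leq -\Delta_\infty^\eps v$ forces $a+b = a'+b'$, hence $a=a'$ and $b=b'$. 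Picking a maximizer $y^* \in B(x_0;\eps)$ of $u$, the chain $a = u(y^*)-u(x_0) \leq v(y^*)-v(x_0) \leq a' = a$ collapses, so $w(y^*) = M$ and $y^* \in K$; symmetrically, any minimizer of $v$ on $B(x_0;\eps)$ lies in $K$. In the degenerate case that $x_0$ is itself both a maximizer of $u$ and a minimizer of $v$ on the ball, one concludes $a=a'=b=b'=0$, so $u$ and $v$ are constant there and $B(x_0;\eps) \subset K$.

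For the iteration step, choose $\bar x \in K$ that maximizes $u$ on the compact set $K$. By the propagation lemma, the maximizer $y^*$ of $u$ on $B(\bar x;\eps)$ must lie in $K$; maximality of $\bar x$ forces $u(y^*) \leq u(\bar x)$, hence $a=0$. Applying the symmetric statement with the minimizer of $v$ and iterating this reasoning (after possibly replacing $\bar x$ by a point in $K$ that additionally minimizes $v$) puts us in the degenerate case, so $B(\bar x;\eps) \subset K$. Consequently $K$ contains the point $\bar x + \eps\,\nu$ where $\nu$ points toward the nearest boundary point of $\Omega$, which strictly decreases the distance to $\partial\Omega$ by $\eps$. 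Repeating this construction produces a sequence in $K$ whose distance to $\partial\Omega$ decreases by $\eps$ at each step, so after finitely many iterations $K$ meets $\closure\Omega_\eps \setminus \Omega_{2\eps}$, contradicting $K \cap (\closure\Omega_\eps \setminus \Omega_{2\eps}) = \emptyset$.

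The main obstacle is the iteration step: the propagation lemma by itself only guarantees that $K$ contains \emph{some} new point in $B(x_0;\eps)$, not one closer to $\partial\Omega$. The trick is to single out an extremal point of $K$ (here, a maximizer of $u$ on $K$, or more carefully a joint extremizer of $u$ and $v$) so that the non-degenerate propagation branches are excluded and one is forced into the degenerate case $B(\bar x;\eps) \subset K$, which alone allows moving in a prescribed direction toward $\partial\Omega$.
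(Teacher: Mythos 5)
Your first two ingredients are sound: at a point $x_0$ of the interior maximum set $K$ of $w=u-v$, the inequalities $a\leq a'$, $b\leq b'$ together with the operator bound $-\Delta_\infty^\eps u\leq C\leq -\Delta_\infty^\eps v$ do force $a=a'$, $b=b'$, and indeed any $u$-maximizer $y^*$ and any $v$-minimizer $z^*$ on $B(x_0;\eps)$ lie in $K$; likewise, if both $a=b=0$ then $u,v$ are constant on the ball and $B(x_0;\eps)\subset K$. The paper does not supply its own proof of this proposition (it cites Smart's thesis and Armstrong--Smart), so I can only judge your argument on its own merits.

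The iteration step, however, has a real gap. You take $\bar x$ to \emph{maximize} $u$ on $K$, which gives $a=a'=0$, and then appeal to ``possibly replacing $\bar x$ by a point in $K$ that additionally minimizes $v$'' to reach the degenerate case. But on $K$ one has $u=v+M$, so maximizing $u$ on $K$ is the \emph{same} as maximizing $v$ on $K$; a point that simultaneously maximizes $u$ and minimizes $v$ on $K$ exists only when $v$ is constant on $K$, which is exactly what needs to be proved. Relatedly, you never invoke the hypothesis $C\geq 0$, which should raise a flag: with $a=a'=0$, the subsolution bound yields only $b\geq -C\eps^2$, and this gives nothing for $C>0$. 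The repair is to pick $\bar x$ to \emph{minimize} $v$ (equivalently $u$) on $K$. Then the propagated $v$-minimizer $z^*\in K$ satisfies $v(z^*)\geq v(\bar x)$, forcing $b'=0$, hence $b=0$; now the supersolution inequality at $\bar x$ reads $-\Delta_\infty^\eps v(\bar x)=-a'/\eps^2\geq C\geq 0$, and since $a'\geq 0$ this gives $a'=a=0$ (for $C>0$ it is already a contradiction). With $a=a'=b=b'=0$ the ball $B(\bar x;\eps)$ lies in $K$, every point in it also minimizes $v$ over $K$, and your march of length $\eps$ toward $\partial\Omega$ then terminates in $\closure\Omega_\eps\setminus\Omega_{2\eps}$, giving the contradiction.
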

\begin{proof}
The statement follows from \cite[Theorem 2.6.5]{smart2010infinity} or, more conveniently, from \cite[Proposition 3.3]{armstrong2012finite}. 
\end{proof}
In our final proof of convergence rates the role of $u$ will be played by the upper envelope of a $p$-harmonic function $u_p$ which we will prove to satisfy $-\Delta_\infty^\eps u_p^\eps \leq C$, where $C>0$ is a positive constant that depends on $\eps$, $p$, $d$, and the Hölder constants of $u$. 
So we need a comparison function $v$ which satisfies the inequality $-\Delta_\infty^\eps v \geq C$. 
Since according to \cref{lem:max-ball} the lower envelope of an infinity-harmonic function $u_\infty$ only satisfies $-\Delta_\infty^\eps (u_\infty)_\eps \geq 0$ we need to perturb it to a strict supersolution of this equation.

This can be achieved using the following lemmas which state that one can perturb any superharmonic function, associated to the operator $-\Delta_\infty^\eps$, into a superharmonic function $v$ with $S_-^\eps v$ bounded from below.
Further, one can perturb any such $v$ into a function $w$ which is a strict supersolution.
\begin{lemma}\label{lem:perturb_pos_grad}
If $u:\closure\Omega_\eps\to\R$ satisfies $-\Delta_\infty^\eps u \geq 0$ in $\Omega_{2\eps}$, then for any $\delta>0$ there is a function $v:\closure\Omega_\eps\to\R$ which satisfies
\begin{align*}
    -\Delta_\infty^\eps v\geq 0,
    \qquad 
    S_-^\eps v \geq {\delta},
    \qquad
    \text{and}
    \qquad
    u \leq v \leq u + 2 \delta \dist(\cdot,\Omega_\eps\setminus\Omega_{2\eps})\quad\text{in }\Omega_{2\eps}.
\end{align*}
\end{lemma}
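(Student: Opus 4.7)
The plan is to build $v$ by a finite-difference Hopf--Lax (sup-convolution) construction, in the spirit of the techniques developed in \cite{smart2010infinity,armstrong2012finite}, which glues a cone of slope $\delta$ onto the boundary layer $\Omega_\eps\setminus\Omega_{2\eps}$. Concretely, I would define $v$ as the unique solution of the dynamic-programming relation
\begin{align*}
    v(x) = \sup_{y \in B(x;\eps)\cap\closure\Omega_\eps}\bigl(v(y) + \delta\abs{x-y}\bigr), \qquad x \in \Omega_{2\eps},
\end{align*}
with ``terminal data'' $v = u$ on $\closure\Omega_\eps\setminus\Omega_{2\eps}$. Equivalently, $v(x)$ is the supremum over discrete $\eps$-step paths $x = x_0, x_1, \ldots, x_N$ with $x_i \in \closure\Omega_\eps$, $\abs{x_i - x_{i-1}} \le \eps$ and $x_N \in \Omega_\eps\setminus\Omega_{2\eps}$ of the functional
\begin{align*}
    u(x_N) + \delta\sum_{i=1}^N\abs{x_i - x_{i-1}}.
\end{align*}
Existence is ensured by monotone iteration of the map $w \mapsto \max\bigl(w,\; \sup_{y\in B(\cdot;\eps)\cap\closure\Omega_\eps}(w(y) + \delta\abs{\cdot-y})\bigr)$ starting from $u$ extended by $-\infty$ on $\Omega_{2\eps}$; the iteration stabilises in at most $\diam(\Omega)/\eps$ steps because every admissible $\eps$-chain from $\Omega_{2\eps}$ reaches the boundary layer in finitely many steps.

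The slope bound $S_-^\eps v \ge \delta$ and the supersolution property $-\Delta_\infty^\eps v \ge 0$ in $\Omega_{2\eps}$ are then both immediate consequences of the DPP. Indeed, the relation above produces a $y^\ast \in B(x;\eps)$ realising the supremum with $\abs{x - y^\ast} = \eps$ and $v(y^\ast) = v(x) - \delta\eps$, which yields $S_-^\eps v(x) \ge \delta$. In the opposite direction, the same DPP gives $v(y) \le v(x) - \delta\abs{x - y} \le v(x)$ for every admissible $y \in B(x;\eps)$, hence $S_+^\eps v(x) \le 0 \le S_-^\eps v(x)$, which is exactly $-\Delta_\infty^\eps v(x) \ge 0$.

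The sandwich $u \leq v \leq u + 2\delta\,\dist(\cdot, \Omega_\eps\setminus\Omega_{2\eps})$ is the delicate part and is where the supersolution hypothesis on $u$ actually enters. For $v \geq u$, one needs to produce at every $x \in \Omega_{2\eps}$ an admissible path along which the drop $u(x) - u(x_N)$ is dominated by $\delta$ times the path length. This can be extracted from the inequality $S_-^\eps u \ge S_+^\eps u$ (which is exactly what $-\Delta_\infty^\eps u \geq 0$ says) via a telescoping argument that interleaves ascending and descending $\eps$-steps so as to keep the accumulated drop under control while still reaching the boundary layer. For $v \leq u + 2\delta \,\dist(\cdot, \Omega_\eps\setminus\Omega_{2\eps})$, one shows that any path of length much larger than $\dist(x, \Omega_\eps\setminus\Omega_{2\eps})$ cannot improve the supremum: the $\delta$-toll paid for the extra length is more than cancelled by the drop of $u$ it enforces at the endpoint, again by the supersolution inequality.

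The main obstacle is precisely this coupled use of the DPP and the inequality $S_-^\eps u \ge S_+^\eps u$ in the two sandwich bounds. Once the telescoping identity
\begin{align*}
    u(x_0) - u(x_N) = \sum_{i=0}^{N-1}\bigl(u(x_i) - u(x_{i+1})\bigr)
\end{align*}
is combined with the local supersolution estimate at every vertex of the path, both inequalities drop out; this is the technical content of \cite[Theorem 2.6.5]{smart2010infinity} and the analogous statement in \cite[Proposition 3.3]{armstrong2012finite}, and amounts to routine dynamic-programming bookkeeping once the correct path construction is identified.
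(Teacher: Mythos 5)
Your dynamic-programming construction is ill-posed. The functional $u(x_N)+\delta\sum_{i=1}^N\abs{x_i-x_{i-1}}$ \emph{increases} whenever the chain is lengthened, so appending a round-trip detour to any admissible chain strictly raises the value while leaving the endpoint unchanged; the supremum over all $\eps$-chains is therefore $+\infty$ at every point of $\Omega_{2\eps}$, and the monotone iteration does not stabilize in $\diam(\Omega)/\eps$ steps, since chains are not required to make progress toward the boundary layer. The fixed-point relation itself cannot have a finite solution either: $v(x)=\sup_{y\in B(x;\eps)}\bigl(v(y)+\delta\abs{x-y}\bigr)$ forces $v(y)\leq v(x)-\delta\abs{x-y}$ for all $y\in B(x;\eps)$ whenever $x\in\Omega_{2\eps}$, and applying this at two neighbouring interior points $x$ and $y$ gives $v(x)+v(y)\leq v(x)+v(y)-2\delta\abs{x-y}$, a contradiction. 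Replacing the supremum by an infimum does not help: the inf-convolution stabilizes but yields the largest $\delta$-Lipschitz minorant of the boundary data, i.e. $S_-^\eps v\leq\delta$, the reverse of what the lemma requires.

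The missing idea is that one should not impose an eikonal-type relation on all of $\Omega_{2\eps}$. As sketched in \cref{rem:perturb_1}, the proof in \cite[Lemma 2.6.3]{smart2010infinity} solves a discrete eikonal problem only on the \emph{bad set} $D_\delta:=\Set{x\in\Omega_{2\eps}\st S_-^\eps u(x)<\delta}$, with boundary data $u$ on $\partial D_\delta$, and keeps $v=u$ outside $D_\delta$. Restricting the modification to $D_\delta$ is precisely what makes the sandwich $u\leq v\leq u+2\delta\dist(\cdot,\Omega_\eps\setminus\Omega_{2\eps})$ and the supersolution property $-\Delta_\infty^\eps v\geq 0$ simultaneously attainable: on $D_\delta$ the original $u$ is an eikonal subsolution and hence lies below its solution $v$, while off $D_\delta$ nothing changes and the slope bound already holds. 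Finally, even for a well-posed recursion your claim that $S_-^\eps v\geq\delta$ is an ``immediate consequence of the DPP'' glosses over the fact that the supremum over $B(x;\eps)$ may be attained strictly inside the ball, so one does not automatically obtain a competitor at distance exactly $\eps$ achieving the required drop $\delta\eps$.
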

\begin{proof}
    The proof can be found in a more general setting in \cite[Lemma 2.6.3]{smart2010infinity}.
\end{proof}
\rev 
\begin{remark}[\cref{lem:perturb_pos_grad} for $\eps=0$]\label{rem:perturb_1}
As pointed out by a reviewer, there exists a corresponding result for $\eps=0$ (cf. \cite[(6.6)]{armstrong2012finite} or \cite[Proposition 5.1]{crandall2008visit}) the proof of which we sketch in the following:
Considering $u$ which satisfies $-\Delta_\infty u\geq 0$ one defines $D_\delta:=\left\lbrace x\in\Omega\st \lim_{\eps\to 0}S_\eps^+u(x)<\delta\right\rbrace$ which turns out to be an open set.
Then one solves the eikonal equation
\begin{align*}
\abs{\grad \hat u} = \delta \quad\text{in }D_\delta
\qquad
\text{with}
\qquad
\hat u = u \quad\text{on }\partial D_\delta
\end{align*}
in the viscosity sense and sets
\begin{align*}
	v := 
	\begin{cases}
		\hat u \quad \text{in }D_\delta\\
		u \quad \text{in }\Omega\setminus D_\delta.
	\end{cases}
\end{align*}
The function can be shown to satisfy $-\Delta_\infty v\geq 0$ and $\abs{\grad v}\geq\delta$ in the viscosity sense and also $\abs{u-v}\leq C\delta$ for some domain-dependent constant $C>0$.

We note that the proof of \cref{lem:perturb_pos_grad} given in \cite[Lemma 2.6.3]{smart2010infinity} is a discrete version of the above argument.
Furthermore, as shown in the proof of \cite[Theorem 2.19]{armstrong2012finite} one can combine the result for $\eps=0$ with \cref{lem:max-ball} to obtain yet another proof of \cref{lem:perturb_pos_grad}.
\end{remark}
\nc

\begin{lemma}\label{lem:perturb_strict}
Suppose $v:\closure\Omega_\eps\to\R$ satisfies $-\Delta_\infty^\eps v \geq 0$ on $\Omega_{2\eps}$.
Then for all $0\leq\delta\leq\frac{1}{4\norm{v}_\infty}$ there exists a function $w:\closure\Omega_\eps\to\R$ that satisfies
\begin{align*}
    -\Delta_\infty^\eps w \geq -\Delta_\infty^\eps v + \delta(S^\eps_- v)^2\quad\text{on }\Omega_{2\eps}
    \qquad
    \text{and}
    \qquad
    \norm{v-w}_\infty \leq 3\norm{v}_\infty^2\delta.
\end{align*}
\end{lemma}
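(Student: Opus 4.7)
The plan is to construct $w = \phi(V) + c$, where $V$ is a shifted copy of $v$ and $\phi$ is a concave quadratic perturbation. I would set $M \defeq \sup_{\closure\Omega_\eps} v$, $V \defeq v - M$, and $\phi(t) \defeq t - \delta t^2$. Then $V$ takes values in $[-(\sup v - \inf v),\, 0] \subset [-2\norm{v}_\infty,\, 0]$, and the assumption $\delta \leq \tfrac{1}{4\norm{v}_\infty}$ guarantees $\phi'(t) = 1 - 2\delta t \geq 1$ for all $t \leq 0$, so $\phi$ is strictly increasing on the range of $V$. Because $\phi$ is monotone there and $\Delta_\infty^\eps$ is invariant under adding the constant $c$, the envelopes of $w$ factor as
\begin{align*}
    w^\eps(x) = \phi(V^\eps(x)) + c, \qquad w_\eps(x) = \phi(V_\eps(x)) + c.
\end{align*}

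Writing $a \defeq V^\eps(x) - V(x) = \eps\, S_+^\eps v(x) \geq 0$ and $b \defeq V(x) - V_\eps(x) = \eps\, S_-^\eps v(x) \geq 0$, a direct expansion of the quadratic $\phi$ produces the exact identity
\begin{align*}
    \eps^2 \Delta_\infty^\eps w(x)
    = \phi(V+a) + \phi(V-b) - 2\phi(V)
    = (a-b)\bigl(1 - 2\delta V(x)\bigr) - \delta(a^2 + b^2).
\end{align*}
Since $\eps^2 \Delta_\infty^\eps v(x) = a - b$, flipping signs yields
\begin{align*}
    -\eps^2 \Delta_\infty^\eps w(x)
    = \bigl(1 - 2\delta V(x)\bigr)\bigl(-\eps^2 \Delta_\infty^\eps v(x)\bigr) + \delta(a^2 + b^2).
\end{align*}
Because $V \leq 0$ we have $1 - 2\delta V \geq 1$, and by hypothesis $-\Delta_\infty^\eps v \geq 0$; hence the first term on the right is at least $-\eps^2 \Delta_\infty^\eps v(x)$. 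Dropping the nonnegative $\delta a^2$ summand delivers the sought differential inequality
\begin{align*}
    -\Delta_\infty^\eps w \geq -\Delta_\infty^\eps v + \delta(S_-^\eps v)^2 \quad \text{on } \Omega_{2\eps}.
\end{align*}

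For the sup-norm estimate, a short calculation gives $w - v = (c - M) - \delta V^2$, which ranges over $[(c-M) - \delta(\sup v - \inf v)^2,\, c - M]$. Choosing $c \defeq M + \tfrac{1}{2}\delta(\sup v - \inf v)^2$ centers $w - v$ symmetrically around $0$, whence
\begin{align*}
    \norm{w - v}_\infty \leq \tfrac{1}{2}\delta(\sup v - \inf v)^2 \leq 2\delta \norm{v}_\infty^2 \leq 3\delta \norm{v}_\infty^2.
\end{align*}
The only subtlety in the argument is that the quadratic perturbation produces the correct sign in the differential inequality only when $V \leq 0$: otherwise the cross term $-2\delta V \cdot (-\eps^2 \Delta_\infty^\eps v)$ would have the wrong sign. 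Subtracting off $M = \sup v$ is precisely what enforces this; the rest is bookkeeping exploiting that $\phi$ is a pure quadratic and that $\Delta_\infty^\eps$ is invariant under adding constants.
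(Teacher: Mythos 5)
Your proof is correct, and while it shares the central idea with the paper's --- perturb a shifted copy of $v$ by the concave quadratic $\phi(t) = t - \delta t^2$ so that the cross term has the right sign --- it takes the shift in the opposite direction, and this is a genuine improvement. The paper shifts \emph{upward}, setting $\tilde v := v + \norm{v}_\infty \geq 0$, and then needs $\phi$ to be increasing on $[0, \norm{\tilde v}_\infty]$; since $\phi'(t) = 1 - 2\delta t$ changes sign at $t = \tfrac{1}{2\delta}$, this forces the restriction $\delta \leq \tfrac{1}{2\norm{\tilde v}_\infty}$, which after the shift becomes $\delta \leq \tfrac{1}{4\norm{v}_\infty}$. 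You shift \emph{downward}, setting $V := v - \sup v \leq 0$; on $(-\infty,0]$ one has $\phi'(t) = 1 - 2\delta t \geq 1$ for every $\delta \geq 0$, so monotonicity --- and hence the commutation of the envelopes with $\phi$ --- is automatic. Your argument therefore never actually uses the hypothesis $\delta \leq \tfrac{1}{4\norm{v}_\infty}$: it proves the lemma for all $\delta \geq 0$, and with the sharper bound $\norm{v-w}_\infty \leq 2\delta\norm{v}_\infty^2$ (your choice of the additive constant $c$ centers $w-v$ and gains a further factor of $2$ over the one-sided bound). The only imprecision in the write-up is attributing the inequality $\phi'(t) \geq 1$ for $t \leq 0$ to the $\delta$-restriction, when in fact it holds unconditionally for $\delta \geq 0$; this does not affect correctness, and is exactly why your version is stronger.
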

\begin{proof}
Very similar statements of this flavor can be found in \cite[Lemma 2.6.4]{smart2010infinity} or in \cite[Lemma 6.5]{armstrong2012finite}. 
However, to have all constants explicit we give the proof.

Without loss of generality we can assume $v\neq 0$.
Let us furthermore first assume that $u\geq 0$ in $\Omega$.
For $\delta\in\left[0,\frac{1}{2\norm{v}_\infty}\right]$ we define the function
\begin{align*}
    w := v - \delta v^2.
\end{align*}
Fix $x\in\Omega_{2\eps}$. 
Without loss of generality we allow ourselves to choose $x^\pm\in B(x;\eps)$ such that $v^\eps(x) = v(x^+)$ and $v_\eps(x) = v(x^-)$.
Otherwise, we can work with sequences of points which attain the supremum and infimum in the definition of $v^\eps$ and $v_\eps$. 
Note that the function $t\mapsto \lambda(t) := t - \delta t^2$ is monotone on $\left(-\infty,\frac{1}{2\delta}\right]$ and hence on the range of $v$.
This implies that $w^\eps(x) = w(x^+)$ and $w_\eps(x) = w(x^-)$.
By definition \labelcref{eq:nonlocal_inf-Laplace} it holds
\begin{align*}
    -\rev\eps\nc\Delta_\infty^\eps w(x) =  S_-^\eps w(x) - S_+^\eps w(x)
\end{align*}
and we will estimate each of these terms separately.
First, it holds
\begin{align*}
    \eps S_-^\eps w(x) 
    &=
    w(x) - w(x^-)
    \\
    &=
    v(x) - v(x^-)
    -
    \delta
    \left(
    v(x)^2-v(x^-)^2
    \right)
    \\
    &=
    \eps S_-^\eps v(x)
    -\delta
    \left(v(x)-v(x_-)\right)\left(v(x)+v(x_-)\right)
    \\
    &=
    \eps 
    S_-^\eps v(x)
    -
    \delta
    \eps
    S_-^\eps v(x)
    \left(2v(x) - \eps S_-^\eps v(x)\right)
\end{align*}
and therefore
\begin{align*}
    S_-^\eps w(x) = S_-^\eps v(x) - \delta S_-^\eps v(x)\left(2v(x) - \eps S_-^\eps v(x)\right).
\end{align*}
The other term is treated similarly:
Using that $v(x^+)\geq v(x)$ we obtain
\begin{align*}
    \eps S_+^\eps w(x)
    &=
    w(x^+) - w(x)
    \\
    &=
    v(x^+) - v(x) + \delta\left(v(x)^2-v(x^+)^2\right)
    \\
    &=
    \eps
    S_+^\eps v(x)
    +
    \delta
    \left(v(x)-v(x^+)\right)
    \left(v(x)+v(x^+)\right)
    \\
    &=
    \eps
    S_+^\eps v(x)
    -
    \delta
    \eps
    S_+^\eps v(x)
    \left(v(x)+v(x^+)\right)
    \\
    &\leq 
    \eps
    S_+^\eps v(x)
    -
    2
    \delta
    \eps
    S_+^\eps v(x)
    \,
    v(x)
\end{align*}
and hence
\begin{align*}
    S_+^\eps w(x) 
    \leq 
    S_+^\eps v(x) - 2\delta S_+^\eps v(x)\,v(x).
\end{align*}
Putting things together we obtain
\begin{align*}
    -\eps\Delta_\infty^\eps w(x)
    &\geq 
    S_-^\eps v(x) - \delta S_-^\eps v(x)\left(2v(x) - \eps S_-^\eps v(x)\right)
    -
    \left(
    S_+^\eps v(x) - 2\delta S_+^\eps v(x)\,v(x)
    \right)
    \\
    &=
    S_-^\eps v(x) 
    - 
    S_+^\eps v(x)
    +
    \delta\eps 
    \left(S_-^\eps v(x)\right)^2
    -
    2\delta
    S_-^\eps v(x)\,v(x)
    +
    2\delta S_+^\eps v(x)\,v(x)
    \\
    &=
    -\eps\Delta_\infty^\eps v(x)
    +\delta\eps\left(S_-^\eps v(x)\right)^2
    +2\delta
    \left(
    S_+^\eps v(x)
    -
    S_-^\eps v(x)
    \right)
    v(x)
    \\
    &=
    -\eps\Delta_\infty^\eps v(x)
    +\delta\eps\left(S_-^\eps v(x)\right)^2
    -2\delta
    \eps 
    \Delta_\infty^\eps v(x)\,
    v(x)
    \\
    &\geq 
    -\eps\Delta_\infty^\eps v(x)
    +\delta\eps\left(S_-^\eps v(x)\right)^2,
\end{align*}
using that $-\Delta_\infty^\eps v(x)\geq 0$ and $v\geq 0$.
Dividing by $\eps$ proves the claim.

For getting rid of the assumption $v\geq 0$ we let $v:\closure\Omega_\eps\to\R$ be arbitrary and consider $\Tilde{v}:=v+L\geq 0$, where $L:=\norm{v}_\infty$.
Applying the previous result shows that $\Tilde{w}:=\Tilde{v}-\delta\Tilde{v}^2$ satisfies for $\delta\in\left[0,\frac{1}{2\norm{\tilde v}_\infty}\right]$
that
\begin{align}\label{ineq:tilde_lapl}
    -\Delta_\infty^\eps\Tilde{w}
    \geq
    -\Delta_\infty^\eps\Tilde{v}
    +
    \delta
    (S_-^\eps\Tilde{v})^2.
\end{align}
Note that we can expand
\begin{align*}
    \Tilde{w} 
    = 
    v+L - \delta
    (v+L)^2
    =
    \underbrace{
    (1-2\delta L)v - \delta v^2}_{=:w}
    +
    L
    -
    \delta
    L^2.
\end{align*}
Hence, \labelcref{ineq:tilde_lapl} is equivalent to
\begin{align*}
    -\Delta_\infty^\eps{w}
    \geq
    -\Delta_\infty^\eps{v}
    +
    \delta
    (S_-^\eps{v})^2.
\end{align*}
Furthermore, we obtain
\begin{align*}
    \norm{v-w}_\infty 
    &=
    \norm{
    2\delta L v + \delta v^2
    }_\infty
    \leq 
    3\norm{v}_\infty^2\delta.
\end{align*}
Finally, since $\norm{\Tilde{v}_\infty}\leq 2\norm{v}_\infty$ the restriction $\delta\in\left[0,\frac{1}{2\norm{\Tilde{v}}_\infty}\right]$ is implied by $\delta\in\left[0,\frac{1}{4\norm{v}_\infty}\right]$.
This concludes the proof.
\end{proof}
\rev
\begin{remark}[\cref{lem:perturb_strict} for $\eps=0$]\label{rem:perturb_2}
Similar to the previous remark it should be mentioned that one can prove a corresponding result for $\eps=0$ by taking a solution of $-\Delta_\infty v\geq 0$ and perturbing it to a strict supersolution by defining $w:=v-\delta v^2$.
\end{remark}
\nc

\subsection{Approximate consistency of \texorpdfstring{$p$}{p}-harmonic functions}
\label{sec:approximate_consisteny}

In this section we shall prove an approximate version of \cref{lem:max-ball} for $p$-harmonic functions. 
Since they do not admit comparison with cones, but rather comparison with Hölder cones of the form $\abs{x}^\frac{p-d}{p-1}$, their upper and lower envelopes are only approximate sub- and supersolutions.

\begin{proposition}\label{prop:approx_consistency}
Let $u_p \in \W^{1,p}(\Omega)$ solve \labelcref{eq:p-Laplace_equation} for $p>d$, and assume that $u_p \in \C^{0,\alpha}(\Omega)$ for some $\alpha \in \left[1-\frac{d}{p},1\right]$.
Then it holds for all $\eps\in\left(0,\frac{1}{2}\right)$ that
\begin{subequations}
    \begin{align}
        -\Delta_\infty^\eps u_p^\eps(x) &\leq 
        \phantom{-}
        2^{1+\alpha} \seminorm{u_p}_{{0,\alpha}}\eps^{\alpha-2}\left(\frac{1}{2^\frac{p-d}{p-1}}-\frac12\right)
        ,\qquad x \in \Omega_{2\eps},\\
        -\Delta_\infty^\eps (u_p)_\eps(x) &\geq 
        -
        2^{1+\alpha} \seminorm{u_p}_{{0,\alpha}}\eps^{\alpha-2}\left(\frac{1}{2^\frac{p-d}{p-1}}-\frac12\right)
        ,\qquad x \in \Omega_{2\eps}.
    \end{align}
\end{subequations}
\end{proposition}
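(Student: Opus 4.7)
The plan is to mimic the proof of \cref{lem:max-ball} (comparison with Euclidean cones for infinity-harmonic functions) but to use the Hölder cones from \cref{prop:comparison} instead, and to carefully track the quantitative defect. The error is controlled by $1/2^\beta-1/2$, which measures by how much a Hölder cone of exponent $\beta:=\frac{p-d}{p-1}$ differs from a linear cone when its argument is rescaled by a factor of two.

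I fix $x\in\Omega_{2\eps}$, so that $\overline{B(x;2\eps)}\subset\Omega$, and set $D:=B(x;2\eps)\setminus\{x\}\Subset\Omega$, whose boundary is $\partial D=\partial B(x;2\eps)\cup\{x\}$. I then apply \cref{prop:comparison} with cone center $x_0:=x\in\R^d\setminus D$ and slope $a:=(u_p^{2\eps}(x)-u_p(x))/(2\eps)^\beta\geq 0$: on $\partial B(x;2\eps)$ the shifted function $\zeta\mapsto u_p(\zeta)-a|\zeta-x|^\beta$ equals $u_p(\zeta)-(u_p^{2\eps}(x)-u_p(x))\leq u_p(x)$ by the choice of $a$, while at $\zeta=x$ it equals $u_p(x)$. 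Hence its maximum on $\partial D$ is $u_p(x)$, and comparison yields
\begin{align*}
u_p(\xi)\leq u_p(x)+\frac{|\xi-x|^\beta}{(2\eps)^\beta}(u_p^{2\eps}(x)-u_p(x))\quad\text{for all }\xi\in\overline{B(x;2\eps)}.
\end{align*}
Taking the supremum over $\xi\in\overline{B(x;\eps)}$ gives the core estimate $u_p^\eps(x)\leq u_p(x)+\frac{1}{2^\beta}(u_p^{2\eps}(x)-u_p(x))$.

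Next I combine this with the identity $(u_p^\eps)^\eps(x)=u_p^{2\eps}(x)$ (iterated supremum) and the trivial lower bound $(u_p^\eps)_\eps(x)\geq u_p(x)$, which holds because $x\in\overline{B(z;\eps)}$ for every $z\in\overline{B(x;\eps)}$. Substituting these into $-\eps^2\Delta_\infty^\eps u_p^\eps(x)=2u_p^\eps(x)-(u_p^\eps)^\eps(x)-(u_p^\eps)_\eps(x)$ produces
\begin{align*}
-\eps^2\Delta_\infty^\eps u_p^\eps(x)\leq 2\left(\frac{1}{2^\beta}-\frac{1}{2}\right)(u_p^{2\eps}(x)-u_p(x)),
\end{align*}
after which the $\alpha$-Hölder bound $u_p^{2\eps}(x)-u_p(x)\leq\seminorm{u_p}_{{0,\alpha}}(2\eps)^\alpha$ and division by $\eps^2$ deliver the upper estimate stated in the proposition. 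The lower estimate for $-\Delta_\infty^\eps(u_p)_\eps$ follows by symmetry: $-u_p$ is also a solution of \labelcref{eq:p-Laplace_equation} with $\seminorm{-u_p}_{{0,\alpha}}=\seminorm{u_p}_{{0,\alpha}}$, we have $(-u_p)^\eps=-(u_p)_\eps$, and $\Delta_\infty^\eps(-w)=-\Delta_\infty^\eps w$ by a direct check of \labelcref{eq:nonlocal_inf-Laplace}, so applying the upper bound to $-u_p$ and flipping signs gives the claim with the same constant.

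The one mild subtlety I anticipate is the admissibility of $D$ in \cref{prop:comparison}: one has to puncture $B(x;2\eps)$ at $x$ so that the cone center $x_0=x$ lies in $\R^d\setminus D$, and then verify that the maximum of the shifted cone on the enlarged boundary $\partial D=\partial B(x;2\eps)\cup\{x\}$ is indeed $u_p(x)$. Once this setup is correct, everything reduces to elementary arithmetic together with the observation that $\frac{1}{2^\beta}-\frac{1}{2}\sim\frac{\ln 2}{2}(1-\beta)$ as $\beta\to 1$, which is ultimately what produces the $\frac{d-1}{p-1}$ dependence in the main theorem.
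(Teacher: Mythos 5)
Your proof is correct and follows essentially the same route as the paper: comparison with the Hölder cone $|\xi-x|^{\beta}$ centered at $x$ on the punctured ball, taking the supremum over $B(x;\eps)$, and closing with the Hölder bound and the trivial iterated-envelope estimates. The only cosmetic difference is that the paper explicitly takes $D:=B(x;2\eps)\setminus\bigl(\{x\}\cup\partial B(x;2\eps)\bigr)$ to make $D$ genuinely open under the paper's convention that $B(x;\eps)$ is the \emph{closed} ball, whereas you write $D:=B(x;2\eps)\setminus\{x\}$ while implicitly treating the ball as open — a notational slip you yourself flag, and which does not affect the argument.
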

\begin{proof}
For a lighter notation we omit the subscript and write $u$ instead of $u_p$ and $g$ instead of $g_p$.
It suffices to prove the first statement. The second one is obtained by replacing $u$ with $-u$ (and $g$ in \labelcref{eq:p-Laplace_equation} with $-g$).

Let us fix $x\in\Omega_{2\eps}$. 
By \labelcref{eq:nonlocal_inf-Laplace,eq:envelopes} we can estimate
\begin{align}
    -\eps^2\Delta_\infty^\eps u^\eps(x) 
    &= 
    2u^\eps(x) - \max_{y\in B(x;\eps)}\max_{z\in B(y;\eps)}u(z)
    -
    \min_{y\in B(x;\eps)}\max_{z\in B(y;\eps)}u(z)
    \notag
    \\
    &\leq 
    2u^\eps(x) - \max_{y\in B(x;2\eps)}u(y)
    -
    u(x)
    \notag
    \\
    \label{eq:NL-inf-L-estimate}
    &=
    2u^\eps(x) - u^{2\eps}(x)
    -
    u(x).
\end{align}
We define $d_p(x,y) := \abs{x-y}^\frac{p-d}{p-1}$ and shall use the abbreviation $\beta:=\tfrac{p-d}{p-1}\in(0,1)$.
We will use the comparison principle to prove that
\begin{align}\label{eq:comparison_with_Holder_cones}
    u(y) \leq u(x) +\frac{u^{2\eps}(x)-u(x)}{\inf_{\abs{z-x}\geq 2\eps}d_p(x,z)}d_p(x,y)\qquad\forall y \in B(x;2\eps).
\end{align}
First, note that for $y\in\partial B(x;2\eps)$ it holds $d_p(x,y)\geq\inf_{\abs{z-x}\geq2\eps}d_p(x,z)$ and $u^{2\eps}(x)\geq u(y)$.
Second, it trivially it also holds $u(x)\leq u(x)$.
These two statements prove that
\begin{align*}
   u(y) \leq u(x) +\frac{u^{2\eps}(x)-u(x)}{\inf_{\abs{z-x}\geq 2\eps}d_p(x,z)}d_p(x,y)\qquad\forall y \in \partial B(x;2\eps)\cup\{x\}.
\end{align*}
Using the comparison principle from \cref{prop:comparison} on the open domain $D:=B(x;2\eps)\setminus\left(\{x\}\cup\partial B(x;2\eps)\right)$ with $\closure D = B(x;2\eps)\subset\Omega$ we see that \labelcref{eq:comparison_with_Holder_cones} holds true.

Next, we maximize \labelcref{eq:comparison_with_Holder_cones} over all $y\in B(x;\eps)$ to obtain $u^\eps(x)$ on the left hand side:
\begin{align*}
    u^\eps(x) 
    \leq 
    u(x)
    +
    \left(u^{2\eps}(x)-u(x)\right)
    \frac{\max_{y\in B(x;\eps)}d_p(x,y)}{\min_{\abs{z-x}\geq2\eps}d_p(x,z)}.
\end{align*}
The ratio on the right side of this expression can be explicitly computed as
\begin{align*}
    \frac{\max_{y\in B(x;\eps)}d_p(x,y)}{\min_{\abs{z-x}\geq2\eps}d_p(x,z)}
    =
    \frac{\eps^\beta}{(2\eps)^\beta}
    =\frac{1}{2^\beta}.
\end{align*}
Hence, we obtain
\begin{align*}
    u^\eps(x) 
    &\leq 
    u(x)
    +
    \left(u^{2\eps}(x)-u(x)\right)
    \frac{1}{2^\beta}
    \\
    &=
    u(x)
    +
    \left(u^{2\eps}(x)-u(x)\right)
    \left(\frac{1}{2} + \frac{1}{2^\beta} - \frac{1}{2}\right)
    \\
    &=
    \frac{1}{2}
    \left(
    u(x) + u^{2\eps}(x)
    \right)
    +
    \seminorm{u}_{{0,\alpha}}
    (2\eps)^\alpha
    \left(\frac{1}{2^\beta} - \frac{1}{2}\right).
\end{align*}
Plugging this into \labelcref{eq:NL-inf-L-estimate} and dividing by $\eps^2$ we obtain
\begin{align*}
    -\Delta_\infty^\eps u^\eps(x)
    \leq 
    2
    \seminorm{u}_{{0,\alpha}}
    (2\eps)^\alpha
    \eps^{-2}
    \left(\frac{1}{2^\beta} - \frac{1}{2}\right)
    =
    2^{1+\alpha}
    \seminorm{u}_{{0,\alpha}}
    \eps^{\alpha-2}
    \left(\frac{1}{2^\beta} - \frac{1}{2}\right).
\end{align*}
\end{proof}

\subsection{Convergence rates}
\label{sec:convergence_rates}

Now we have proved all we need for proving convergence rates. 
We first prove the following convergence rate which depends on a free parameter $\eps>0$.
Optimizing over this parameter will then yield \cref{thm:explicit_rate}.

\begin{theorem}[General convergence rate]\label{thm:general_cvgc}
Let $u_p \in \W^{1,p}(\Omega)$ solve \labelcref{eq:p-Laplace_equation} for $p>d$ and $u_\infty \in \W^{1,\infty}(\Omega)$ solve \labelcref{eq:inf-Laplace_equation}.
Assume that $u_p \in \C^{0,\alpha}(\closure\Omega)$ for some $\alpha \in \left[1-\nicefrac{d}{p},1\right]$.
Let $0 < \eps < \nicefrac{1}{2}$ and $p>d$ so large such that 
\begin{align}\label{eq:restriction_p_eps}
    \frac{1}{2^\frac{p-d}{p-1}} - \frac12 
    \leq 
    \frac{\eps^{2-\alpha}}{2^{7+\alpha}\norm{u_\infty}_\infty^3\seminorm{u_p}_{{0,\alpha}}}.
\end{align}
Then there exists a constant $C=C(\Omega,\norm{u_\infty}_\infty)$ such that 
\begin{align}\label{eq:general_rate}
    \begin{split}
    \norm{u_p - u_\infty}_{\infty}
    &\leq 
    (2+2^{\alpha})\seminorm{u_p}_{{0,\alpha}}\eps^{\alpha}
    +
    4[u_\infty]_{{0,1}}\eps
    \\
    &\qquad
    +
    C
    \left( \seminorm{u_p}_{{0,\alpha}}\eps^{\alpha-2}\left(\frac{1}{2^\frac{p-d}{p-1}}-\frac12\right)\right)^\frac{1}{3}
    +
    \max_{\partial\Omega}\abs{g_p - g_\infty}.
    \end{split}
\end{align}
If $\essinf_\Omega\abs{\grad u_\infty}=:\gamma>0$, then this can be improved to
\begin{align}\label{eq:general_rate_pos_grad}
    \begin{split}
    \norm{u_p - u_\infty}_{\infty}
    &\leq 
    (2+2^{\alpha})\seminorm{u_p}_{{0,\alpha}}\eps^{\alpha}
    +
    4[u_\infty]_{{0,1}}\eps
    \\
    &\qquad
    +
    C
    \seminorm{u_p}_{{0,\alpha}}\eps^{\alpha-2}\left(\frac{1}{2^\frac{p-d}{p-1}}-\frac12\right)
    +
    \max_{\partial\Omega}\abs{g_p - g_\infty}.
    \end{split}
\end{align}
\end{theorem}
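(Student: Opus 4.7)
The plan is to execute the four-step strategy laid out at the beginning of \cref{sec:proof}, combining the perturbation results (\cref{lem:perturb_pos_grad,lem:perturb_strict}) with the approximate consistency of $p$-harmonic functions (\cref{prop:approx_consistency}) and the nonlocal comparison principle (\cref{prop:nonlocal_max_princ}).

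First I would construct a strict supersolution $w$ of $-\Delta_\infty^\eps$ that sits close to $(u_\infty)_\eps$. By \cref{lem:max-ball} the lower envelope satisfies $-\Delta_\infty^\eps (u_\infty)_\eps \geq 0$, so applying \cref{lem:perturb_pos_grad} with a parameter $\delta>0$ (to be chosen) yields $v$ with $-\Delta_\infty^\eps v\geq 0$, $S_-^\eps v \geq \delta$, and $(u_\infty)_\eps\leq v\leq (u_\infty)_\eps+2\delta\diam(\Omega)$. A subsequent application of \cref{lem:perturb_strict} to $v$ with the same $\delta$ (valid as soon as $\delta\leq 1/(4\norm{v}_\infty)$) produces $w$ with
\begin{align*}
    -\Delta_\infty^\eps w \geq \delta(S_-^\eps v)^2\geq \delta^3,\qquad\norm{v-w}_\infty\leq 3\norm{v}_\infty^2\delta,
\end{align*}
hence $\norm{w-(u_\infty)_\eps}_\infty\lesssim\delta$ with an implicit constant depending only on $\Omega$ and $\norm{u_\infty}_\infty$. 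On the other side, \cref{prop:approx_consistency} gives $-\Delta_\infty^\eps u_p^\eps\leq A_p$ where
\begin{align*}
    A_p := 2^{1+\alpha}\seminorm{u_p}_{{0,\alpha}}\eps^{\alpha-2}\left(\frac{1}{2^{(p-d)/(p-1)}}-\tfrac12\right),
\end{align*}
and I would pick $\delta:=A_p^{1/3}$ to balance the two. The restriction \labelcref{eq:restriction_p_eps} is exactly calibrated to guarantee the smallness hypothesis of \cref{lem:perturb_strict} with this choice.

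Next I would invoke the nonlocal comparison principle \cref{prop:nonlocal_max_princ} to obtain
\begin{align*}
    \sup_{\closure{\Omega}_\eps}(u_p^\eps - w)=\sup_{\closure{\Omega}_\eps\setminus\Omega_{2\eps}}(u_p^\eps-w).
\end{align*}
On the boundary strip one has $v=(u_\infty)_\eps$ by \cref{lem:perturb_pos_grad}, so $\abs{w-(u_\infty)_\eps}\leq 3\norm{v}_\infty^2\delta$ there. For each $x$ in this strip, choosing $\xi\in\partial\Omega$ with $\abs{x-\xi}\leq 2\eps$ and combining the $\alpha$-Hölder continuity of $u_p$, the Lipschitz continuity of $u_\infty$, and the definitions of the envelopes yields
\begin{align*}
    u_p^\eps(x)-(u_\infty)_\eps(x)\leq (1+2^\alpha)\seminorm{u_p}_{{0,\alpha}}\eps^\alpha+3\seminorm{u_\infty}_{{0,1}}\eps+\max_{\partial\Omega}\abs{g_p-g_\infty}.
\end{align*}
Extending this estimate to all of $\closure\Omega_\eps$ via the comparison identity and combining with $u_p\leq u_p^\eps$ and $w\leq u_\infty+2\delta\diam(\Omega)+3\norm{v}_\infty^2\delta$ gives the one-sided bound for $u_p-u_\infty$ appearing in \labelcref{eq:general_rate}. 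The reverse inequality is produced by the symmetric argument with $(u_\infty)^\eps$ (a subsolution by \cref{lem:max-ball}) and $(u_p)_\eps$ (an approximate supersolution by \cref{prop:approx_consistency}).

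For the improved rate \labelcref{eq:general_rate_pos_grad} under $\essinf_\Omega\abs{\grad u_\infty}=\gamma>0$, the crucial observation is that the gradient lower bound directly entails $S_-^\eps(u_\infty)_\eps\gtrsim\gamma$: any minimizer $y_0$ of $u_\infty$ on $B(x;\eps)$ lies on $\partial B(x;\eps)$, and moving one radius $\eps$ from $y_0$ in the direction of steepest descent produces a point in $B(x;2\eps)$ where $u_\infty$ has dropped by at least $\gamma\eps$. Consequently \cref{lem:perturb_pos_grad} can be bypassed entirely: applying \cref{lem:perturb_strict} to $v=(u_\infty)_\eps$ directly gives $-\Delta_\infty^\eps w\geq \delta\gamma^2$ together with $\norm{v-w}_\infty\leq 3\norm{v}_\infty^2\delta$, and now the choice $\delta=A_p/\gamma^2$ removes the cube root and leaves a perturbation error linear in $A_p/\gamma^2$. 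The main obstacle throughout is the careful bookkeeping of all the perturbation errors (two envelopes, two perturbation lemmas, boundary strip vs.\ interior) and ensuring that the smallness assumption \labelcref{eq:restriction_p_eps} propagates through both sides of the estimate without being violated.
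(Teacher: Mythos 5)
Your proposal follows the paper's proof nearly step by step: approximate consistency (\cref{prop:approx_consistency}) gives $-\Delta_\infty^\eps u_p^\eps\leq A_p$, the two perturbation lemmas (\cref{lem:perturb_pos_grad,lem:perturb_strict}) manufacture a strict supersolution $w$ with $-\Delta_\infty^\eps w\geq\delta^3$ and $\norm{w-(u_\infty)_\eps}_\infty\leq(2\diam(\Omega)+3\norm{u_\infty}_\infty^2)\delta$, the choice $\delta=A_p^{1/3}$ matches the two, and the nonlocal comparison principle (\cref{prop:nonlocal_max_princ}) pushes the estimate to the boundary strip where Hölder/Lipschitz continuity plus the boundary data take over. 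The constants you produce, the boundary-strip bookkeeping with a point $\xi\in\partial\Omega$ at distance $\leq 2\eps$, and the observation that \labelcref{eq:restriction_p_eps} is tuned to make $\delta\leq 1/(4\norm{u_\infty}_\infty)$ all coincide with the paper's argument, and the $\gamma>0$ improvement by skipping \cref{lem:perturb_pos_grad} and feeding $(u_\infty)_\eps$ directly into \cref{lem:perturb_strict} is exactly what the paper does.

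The one step that is not rigorous as written is the claim $S_-^\eps(u_\infty)_\eps\geq\gamma$. Your ``move one radius $\eps$ in the direction of steepest descent'' picture does not by itself prove that $u_\infty$ drops by a full $\gamma\eps$ over a ball of radius $\eps$: for a generic Lipschitz function with $\essinf\abs{\grad}\geq\gamma$ the gradient direction can turn and the drop can be strictly smaller. What is actually needed is the increasing slope estimate for infinity-(super)harmonic functions, i.e.\ that $\eps\mapsto S_-^\eps u_\infty(x)$ is nondecreasing with $\lim_{\eps\to 0}S_-^\eps u_\infty(x)=\abs{\grad u_\infty(x)}$, together with a compound-envelope monotonicity to pass from $S_-^\eps u_\infty\geq\gamma$ to $S_-^\eps(u_\infty)_\eps\geq\gamma$. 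The paper invokes exactly this via \cite[Lemma~5.2]{armstrong2012finite} and \cite[proof of Theorem~2.19]{armstrong2012finite}; you should cite those results (or supply a proof of the increasing slope estimate) instead of appealing informally to steepest descent. Everything else in your proposal, including the observation that $v=(u_\infty)_\eps$ on $\closure\Omega_\eps\setminus\Omega_{2\eps}$, is a cosmetic variant of the paper's assembly and is correct.
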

\begin{remark}
The restriction \labelcref{eq:restriction_p_eps} on $p$ can be satisfied if $\alpha=\alpha_p$ and $\limsup_{p\to\infty}\seminorm{u_p}_{{0,\alpha_p}}<\infty$, see \cref{rem:limsup_H} for a sufficient condition for this to hold.
\end{remark}
\begin{proof}[Proof of \cref{thm:general_cvgc}]
We will only prove an upper bound for $\max_{\closure\Omega}(u_p - u_\infty)$. 
The converse inequality follows by replacing $u_p$ and $u_\infty$ with $-u_p$ and $-u_\infty$, respectively.
By \cref{lem:max-ball,prop:approx_consistency} it holds 
\begin{align*}
    -\Delta_\infty^\eps u_p^\eps \leq 
    2^{1+\alpha} \seminorm{u_p}_{{0,\alpha}}\eps^{\alpha-2}\left(\frac{1}{2^\frac{p-d}{p-1}}-\frac12\right)
    \qquad 
    \text{and}
    \qquad 
    -\Delta_\infty^\eps (u_\infty)_\eps \geq 0
    \qquad
    \text{in }\Omega_{2\eps}.
\end{align*}
By assumption \labelcref{eq:restriction_p_eps} it holds
\begin{align*}
    \delta:=\left(2^{1+\alpha} \seminorm{u_p}_{{0,\alpha}}\eps^{\alpha-2}\left(\frac{1}{2^\frac{p-d}{p-1}}-\frac12\right)\right)^\frac{1}{3} \leq \frac{1}{4\norm{u_\infty}_\infty}
\end{align*}
and hence we can apply \cref{lem:perturb_pos_grad,lem:perturb_strict} with this value of $\delta$ to find a function $w:\closure\Omega_{2\eps}\to\R$ such that 
\begin{align*}
    -\Delta_\infty^\eps u_p^\eps \leq
    \delta^3
    &\leq 
    -\Delta_\infty^\eps w
    \qquad
    \text{in }\Omega_{2\eps}
    \qquad
    \text{and}
    \qquad 
    \norm{w - (u_\infty)_\eps}_\infty
    \leq 
    \Tilde C(\Omega,\norm{u_\infty}_\infty)
    \delta.
\end{align*}
Note that the constant has the explicit value $\Tilde C(\Omega,\norm{u_\infty}_\infty)=2\diam(\Omega)+3\norm{u_\infty}_\infty^2$, albeit we do not claim this is optimal.
Together with \cref{prop:nonlocal_max_princ} this implies that
\begin{align*}
    \max_{\closure\Omega_{\eps}}(u_p^\eps - (u_\infty)_\eps) 
    &\leq 
    \sup_{\closure\Omega_{\eps}}(u_p^\eps - w) 
    +
    \sup_{\closure\Omega_{\eps}}(w - (u_\infty)_\eps) 
    \\
    &\leq     \sup_{\closure\Omega_{\eps}\setminus\Omega_{2\eps}}(u_p^\eps - w)
    +
    \Tilde C(\Omega,\norm{u_\infty}_\infty)\delta
    \\
    &\leq     \max_{\closure\Omega_{\eps}\setminus\Omega_{2\eps}}(u_p^\eps - (u_\infty)_\eps)
    +
    2
    \Tilde C(\Omega,\norm{u_\infty}_\infty)\delta.
\end{align*}
In order to complete the proof, we have to replace the upper and lower envelopes on the left hand side by $u_p$ and $u_\infty$, and control the boundary term on the right hand side.
Both terms are treated easily, using that $u_p\in\C^{0,\alpha}(\closure\Omega)$ and $u_\infty\in\C^{0,1}(\closure\Omega)$.
Using this we obtain the estimate
\begin{align*}
    \max_{\closure\Omega}(u_p-u_\infty)
    &\leq 
    \seminorm{u_p}_{{0,\alpha}}\eps^\alpha
    +
    [u_\infty]_{{0,1}}\eps
    +
    \max_{\closure\Omega_\eps}(u_p-u_\infty), 
    \\
    &\leq 
    \seminorm{u_p}_{{0,\alpha}}\eps^\alpha
    +
    [u_\infty]_{{0,1}}\eps
    +
    \max_{\closure\Omega_\eps}(u_p^\eps-(u_\infty)_\eps).
\end{align*}
Similarly we can estimate
\begin{align*}
    \max_{\closure\Omega_{\eps}\setminus\Omega_{2\eps}}(u_p^\eps - (u_\infty)_\eps)
    &\leq 
    \seminorm{u_p}_{{0,\alpha}}\eps^\alpha
    +
    [u_\infty]_{{0,1}}\eps
    +
    \max_{\closure\Omega_{\eps}\setminus\Omega_{2\eps}}(u_p-u_\infty)
    \\
    &\leq
    (1+2^\alpha)\seminorm{u_p}_{{0,\alpha}}\eps^\alpha
    +
    3[u_\infty]_{{0,1}}\eps
    +
    \max_{\partial\Omega}\abs{g_p - g_\infty}.
\end{align*}
Combining all these estimates we obtain
\begin{align*}
    \max_{\closure\Omega}(u_p-u_\infty)
    \leq 
    (2+2^\alpha)\seminorm{u_p}_{{0,\alpha}}\eps^\alpha
    +
    4[u_\infty]_{{0,1}}\eps
    +
    2
    \Tilde C(\Omega,\norm{u_\infty}_\infty)\delta
    +
    \max_{\partial\Omega}\abs{g_p - g_\infty}.
\end{align*}
Using the definition of $\delta$ and defining $C(\Omega,\norm{u_\infty}_\infty):=2^\frac{4+\alpha}{3}\Tilde C(\Omega,\norm{u_\infty}_\infty)$ concludes the proof of the first statement.

In the case that $\essinf_\Omega\abs{D u_\infty}=:\gamma>0$ we can apply \cref{lem:perturb_strict} with
\begin{align*}
    \delta:=\frac{2^{1+\alpha}}{\gamma^2}\seminorm{u_p}_{{0,\alpha}}\eps^{\alpha-2}\left(\frac{1}{2^\frac{p-d}{p-1}}-\frac12\right)
\end{align*}
directly to $v := (u_\infty)_\eps$.
Let us for now claim that $S_-^\eps v \geq \gamma$ and, hence, we obtain a function $w\in\C(\closure\Omega_\eps)$ such that
\begin{align*}
    -\Delta_\infty^\eps u_p^\eps 
    \leq 
    \gamma^2\delta
    \leq 
    -\Delta_\infty^\eps w
    \qquad
    \text{in }\Omega_{2\eps}
    \qquad\text{and}
    \qquad 
    \norm{w - (u_\infty)_\eps}_\infty
    \leq 
    C(\Omega,\norm{u_\infty}_\infty)
    \delta.
\end{align*}
From here the proof continues as above, albeit with the different value of $\delta$ and with the constant $C(\Omega,\norm{u_\infty}_\infty) := 2^{2+\alpha}\Tilde{C}(\Omega,\norm{u_\infty}_\infty)$.

It remains to argue that indeed $S_-^\eps v \geq \gamma$.
Remembering that $v = (u_\infty)_\eps$ and that, in particular, $-\Delta_\infty u_\infty \geq 0$, \cite[Lemma 5.2]{armstrong2012finite} implies that $S_-^\eps u_\infty \geq \gamma$. 
This is not yet quite enough, but as in \cite[Proof of Theorem 2.19]{armstrong2012finite} one obtains $S_-^\eps(u_\infty)_\eps \geq S_-^\eps u_\infty\geq\gamma$.
This concludes the proof.
\end{proof}

By optimizing over the free parameter $\eps>0$ we obtain the explicit convergence rate in \cref{thm:explicit_rate}.
For this one balances the first and the third term in \labelcref{eq:general_rate} to express $\eps$ in terms of $p$. 
Note that the second term of order $\eps$ is dominated by the first $\eps^{\alpha}$ since $\alpha\leq 1$.
The final rate is the sum of the resulting value of $\eps^{\alpha}$ and the error of the boundary data.
\begin{proof}[Proof of \cref{thm:explicit_rate}]
We start by optimizing the right hand side in \labelcref{eq:general_rate} in terms of $\eps$. 
The sum of all $\eps$-dependent terms will be as small as possible if they all scale in the same way.
Since $\alpha\leq 1$ and $\eps<1$, the term $\eps^\alpha$ dominates the term $\eps$ and it suffices to choose $\eps$ such that the following is satisfied:
\begin{align*}
    \eps^\alpha = \left(\eps^{\alpha-2}\left(\frac{1}{2^\beta}-\frac12\right)\right)^\frac{1}{3}
\end{align*}
where we abbreviate $\beta:=\frac{p-d}{p-1}$, like in the proof of \cref{thm:general_cvgc}.
We can equivalently reformulate this equation as follows:
\begin{align*}
    &\phantom{\iff}
    \eps^\alpha 
    = 
    \left(\eps^{\alpha-2}\left(\frac{1}{2^\beta}-\frac12\right)\right)^\frac{1}{3}
    \\
    &\iff 
    \eps^{2\alpha+2} 
    = 
    \frac{1}{2^\beta}-\frac12
    \\
    &\iff 
    \eps
    = 
    \left(
    \frac{1}{2^\beta}-\frac12
    \right)^\frac{1}{2\alpha+2}.
\end{align*}
The function $(0,1)\ni\beta \mapsto \frac{1}{2^\beta}-\frac12$ is squeezed between two linear functions
\begin{align*}
    \frac{\ln 2}{2}(1-\beta) \leq \frac{1}{2^\beta}-\frac12 \leq \frac{1}{2}(1-\beta)
\end{align*}
and so we do not loose anything but bounding it from above with the right linear function, yielding
\begin{align}\label{eq:eps_in_terms_of_p}
    \eps \leq \left(\frac{1-\beta}{2}\right)^\frac{1}{2\alpha+2}
    =
    \left(\frac12\frac{d-1}{p-1}\right)^\frac{1}{2\alpha+2},
\end{align}
where we resubstituted $\beta$.
        
Since the right hand side of this expression converges to zero as $p\to\infty$ and $\mathsf H<\infty$, for $p$ large enough \labelcref{eq:restriction_p_eps} is satisfied.
Hence, we can apply \cref{thm:general_cvgc} and
\labelcref{eq:eps_in_terms_of_p} to obtain the rate
\begin{align*}
    \norm{u_p - u_\infty}_\infty
    &\leq 
    C(\Omega,\mathsf H,\norm{u_\infty}_{{0,1}})
    \eps^\alpha
    +
    \max_{\partial\Omega}\abs{g_p - g_\infty}
    \\
    &\leq 
    C(\Omega,\mathsf H,\norm{u_\infty}_{{0,1}})
    \left(
    \frac{d-1}{p-1}
    \right)^\frac{\alpha}{2\alpha+2}
    +
    \max_{\partial\Omega}\abs{g_p - g_\infty}.
\end{align*}
In the case that $\essinf_\Omega\abs{\grad u_\infty}=:\gamma>0$ we have to solve
\begin{align*}
    \eps^\alpha = \eps^{\alpha-2}\left(\frac{1}{2^\beta}-\frac12\right).
\end{align*}
Even easier than before one gets
\begin{align*}
    \eps \leq 
    \left(\frac12\frac{d-1}{p-1}\right)^\frac{1}{2},
\end{align*}
which results in the rate
\begin{align*}
    \norm{u_p - u_\infty}_\infty
    \leq 
    \frac{C(\Omega,\mathsf H,\norm{u_\infty}_{{0,1}})}{\gamma^2} 
    \left(\frac{d-1}{p-1}\right)^\frac{\alpha}{2}
    +
    \max_{\partial\Omega}\abs{g_p - g_\infty}.
\end{align*}
\end{proof}

\section*{Acknowledgments}

This work was supported by the Deutsche Forschungsgemeinschaft (DFG, German Research Foundation) under Germany's Excellence Strategy - GZ 2047/1, Projekt-ID 390685813. 
Parts of this work were also done while the author was in residence at Institut Mittag-Leffler in Djursholm, Sweden during the semester on \textit{Geometric Aspects of Nonlinear Partial Differential Equations} in 2022, supported by the Swedish Research Council under grant no. 2016-06596.
The author would also like to thank Mikko Parviainen and Tim Roith for interesting discussions on the topic of this paper as well as Peter Lindqvist and Antoni Kijowski for helpful remarks on the first version of the preprint.

\printbibliography

@article{esposito2015neumann,
  title={{The Neumann eigenvalue problem for the $\infty$-Laplacian}},
  author={Esposito, Luca and Kawohl, Bernd and Nitsch, Carlo and Trombetti, Cristina},
  journal={Rendiconti Lincei-Matematica e Applicazioni},
  volume={26},
  number={2},
  pages={119--134},
  year={2015}
}

@article{bhattacharya1989limits,
  title={{Limits as $p\to\infty$ of $\Delta_p u=f$ and related extremal problems}},
  author={Bhattacharya, Tilak and DiBenedetto, Emmanuele and Manfredi, Juan},
  journal={Rend. Sem. Mat. Univ. Politec. Torino},
  volume={47},
  pages={15--68},
  year={1989}
}

@article{bungert2022eigenvalue, 
    title={{Eigenvalue problems in $\mathrm{L}^\infty$: optimality conditions, duality, and relations with optimal transport}},
    volume={2}, 
    DOI={10.1090/cams/11}, 
    number={8}, 
    journal={Communications of the American Mathematical Society}, 
    publisher={American Mathematical Society (AMS)}, 
    author={Bungert, Leon and Korolev, Yury}, 
    year={2022}, 
    pages={345–373}
}

@article{rossi2016first,
  title={{On the first nontrivial eigenvalue of the $\infty$-Laplacian with Neumann boundary conditions}},
  author={Rossi, Julio Daniel and Saintier, Nicolas Bernard Claude},
  year={2016},
  journal={Houston Journal Of Mathematics},
  volume={42},
  number={2},
  pages={613--635},
  publisher={University of Houston}
}

@article{juutinen1999eigenvalue,
  title={{The $\infty$-eigenvalue problem}},
  author={Juutinen, Petri and Lindqvist, Peter and Manfredi, Juan J},
  journal={Archive for rational mechanics and analysis},
  volume={148},
  number={2},
  pages={89--105},
  year={1999},
  publisher={Springer}
}

@book{lindqvist2019notes, 
    title={Notes on the Stationary $p$-Laplace Equation}, 
    journal={SpringerBriefs in Mathematics}, 
    publisher={Springer International Publishing}, 
    author={Lindqvist, Peter}, 
    year={2019} 
}

@phdthesis{smart2010infinity,
  title={On the infinity Laplacian and Hrushovski's fusion},
  author={Smart, Charles Krug},
  year={2010},
  school={UC Berkeley}
}

@article{armstrong2010easy,
  title={An easy proof of Jensen’s theorem on the uniqueness of infinity harmonic functions},
  author={Armstrong, Scott N and Smart, Charles K},
  journal={Calculus of Variations and Partial Differential Equations},
  volume={37},
  number={3},
  pages={381--384},
  year={2010},
  publisher={Springer}
}

@article{jensen1993uniqueness,
  title={Uniqueness of Lipschitz extensions: minimizing the sup norm of the gradient},
  author={Jensen, Robert},
  journal={Archive for Rational Mechanics and Analysis},
  volume={123},
  pages={51--74},
  year={1993},
  publisher={Springer}
}

@article{bungert2022uniform,
    author = {Bungert, Leon and Calder, Jeff and Roith, Tim},
    title = {Uniform convergence rates for Lipschitz learning on graphs},
    journal = {IMA Journal of Numerical Analysis},
    year = {2022},
    month = {09},
    doi = {10.1093/imanum/drac048}
}

@online{bungert2022ratio,
    title = {Ratio convergence rates for Euclidean first-passage percolation: Applications to the graph infinity Laplacian},
    author = {Bungert, Leon and Calder, Jeff and Roith, Tim},
    year={2022},
    eprint={2210.09023},
    archivePrefix={arXiv},
    primaryClass={math.PR}
}

@article{armstrong2012finite,
  title={A finite difference approach to the infinity Laplace equation and tug-of-war games},
  author={Armstrong, Scott and Smart, Charles},
  journal={Transactions of the American Mathematical Society},
  volume={364},
  number={2},
  pages={595--636},
  year={2012}
}

@article{aronsson2004tour,
  title={A tour of the theory of absolutely minimizing functions},
  author={Aronsson, Gunnar and Crandall, Michael and Juutinen, Petri},
  journal={Bulletin of the American mathematical society},
  volume={41},
  number={4},
  pages={439--505},
  year={2004}
}

@book{lindqvist2016notes, 
    title={Notes on the Infinity Laplace Equation},
    journal={SpringerBriefs in Mathematics}, 
    publisher={Springer International Publishing}, 
    author={Lindqvist, Peter}, 
    year={2016}
}

@article{barles2001existence,
  title={Existence and comparison results for fully nonlinear degenerate elliptic equations without zeroth-order term},
  author={Barles, Guy and Busca, J{\'e}r{\^o}me},
  journal={Communications in Partial Differential Equations},
  volume={26},
  number={11-12},
  pages={2323--2337},
  year={2001},
  publisher={Taylor \& Francis}
}

@article{lindqvist1986definition,
    title={On the definition and properties of $p$-superharmonic functions},
    author={Lindqvist, Peter},
    volume={1986}, 
    number={365}, 
    journal={Journal für die reine und angewandte Mathematik (Crelles Journal)}, 
    publisher={Walter de Gruyter GmbH}, year={1986}, 
    month={02}, 
    pages={67–79} 
}

@book{adams2003sobolev,
  title={Sobolev spaces},
  author={Adams, Robert A and Fournier, John JF},
  year={2003},
  publisher={Elsevier}
}

@article{aronsson1967extension,
  title={Extension of functions satisfying Lipschitz conditions},
  author={Aronsson, Gunnar},
  journal={Arkiv f{\"o}r Matematik},
  volume={6},
  number={6},
  pages={551--561},
  year={1967},
  publisher={Springer}
}

@article{bungert2023inhomogeneous,
  title={The inhomogeneous $p$-Laplacian equation with Neumann boundary conditions in the limit $ p\to\infty$},
  author={Bungert, Leon},
  journal={Advances in Continuous and Discrete Models},
  volume={2023},
  number={1},
  pages={1--17},
  year={2023},
  publisher={SpringerOpen}
}

@article{juutinen2006equivalence,
  title={Equivalence of AMLE, strong AMLE, and comparison with cones in metric measure spaces},
  author={Juutinen, Petri and Shanmugalingam, Nageswari},
  journal={Mathematische Nachrichten},
  volume={279},
  number={9-10},
  pages={1083--1098},
  year={2006},
  publisher={Wiley Online Library}
}

@article{legruyer2007absolutely, 
    title={On absolutely minimizing Lipschitz extensions and PDE $\Delta_\infty (u) = 0$}, 
    volume={14}, 
    number={1–2}, 
    journal={Nonlinear Differential Equations and Applications NoDEA}, 
    publisher={Springer Science and Business Media LLC}, 
    author={Le Gruyer, E.}, 
    year={2007}, 
    month={08}, 
    pages={29–55}
}

@inproceedings{juutinen2002absolutely,
  title={Absolutely minimizing Lipschitz extensions on a metric space},
  author={Juutinen, Petri},
  booktitle={Annales-Academiae Scientiarum Fennicae Mathematica},
  volume={27},
  number={1},
  pages={57--68},
  year={2002},
  organization={Academia Scientiarum Fennica Mathematica}
}

@article{sheffield2012vector,
  title={Vector-valued optimal Lipschitz extensions},
  author={Sheffield, Scott and Smart, Charles K},
  journal={Communications on Pure and Applied Mathematics},
  volume={65},
  number={1},
  pages={128--154},
  year={2012},
  publisher={Wiley Online Library}
}

@article{katzourakis2012variational,
  title={$L^\infty$ variational problems for maps and the Aronsson PDE system},
  author={Katzourakis, Nikolaos I},
  journal={Journal of Differential Equations},
  volume={253},
  number={7},
  pages={2123--2139},
  year={2012},
  publisher={Elsevier}
}

@online{brustad2023infinity,
  title={The Infinity-Laplacian in Smooth Convex Domains and in a Square},
  author={Brustad, Karl K and Lindgren, Erik and Lindqvist, Peter},
  year={2023},
  eprint={2301.09022},
  archivePrefix={arXiv},
  primaryClass={math.AP}
}

@article{lindgren2021gradient,
  title={The gradient flow of infinity-harmonic potentials},
  author={Lindgren, Erik and Lindqvist, Peter},
  journal={Advances in Mathematics},
  volume={378},
  pages={107526},
  year={2021},
  publisher={Elsevier}
}

@article{juutinen2016discontinuous,
  title={Discontinuous gradient constraints and the infinity Laplacian},
  author={Juutinen, Petri and Parviainen, Mikko and Rossi, Julio D},
  journal={International Mathematics Research Notices},
  volume={2016},
  number={8},
  pages={2451--2492},
  year={2016},
  publisher={Oxford University Press}
}

@article{bjorland2012nonlocal,
  title={Nonlocal tug-of-war and the infinity fractional Laplacian},
  author={Bjorland, Clayton and Caffarelli, Luis and Figalli, Alessio},
  journal={Communications on Pure and Applied Mathematics},
  volume={65},
  number={3},
  pages={337--380},
  year={2012},
  publisher={Wiley Online Library}
}

@online{bjorn2021volume,
  title={Volume growth, capacity estimates, $ p $-parabolicity and sharp integrability properties of $ p $-harmonic Green functions},
  author={Bj{\"o}rn, Anders and Bj{\"o}rn, Jana and Lehrb{\"a}ck, Juha},
  year={2021},
  eprint={2101.11486},
  archivePrefix={arXiv},
  primaryClass={math.AP}
}

@article{chambolle2012holder,
  title={A H{\"o}lder infinity Laplacian},
  author={Chambolle, Antonin and Lindgren, Erik and Monneau, R{\'e}gis},
  journal={ESAIM: Control, Optimisation and Calculus of Variations},
  volume={18},
  number={3},
  pages={799--835},
  year={2012},
  publisher={EDP Sciences}
}

@article{del2022asymptotic,
  title={On asymptotic expansions for the fractional infinity Laplacian},
  author={del Teso, F{\'e}lix and Endal, J{\o}rgen and Lewicka, Marta},
  journal={Asymptotic Analysis},
  volume={127},
  number={3},
  pages={201--216},
  year={2022},
  publisher={IOS Press}
}

@online{del2022evolution,
  title={Evolution Driven by the Infinity Fractional Laplacian},
  author={del Teso, F{\'e}lix and Endal, J{\o}rgen and Jakobsen, Espen R and V{\'a}zquez, Juan Luis},
  year={2022},
  eprint={2210.06414},
  archivePrefix={arXiv},
  primaryClass={math.AP}
}

@article{bjorn2020existence,
  title={Existence and almost uniqueness for $p$-harmonic Green functions on bounded domains in metric spaces},
  author={Bj{\"o}rn, Anders and Bj{\"o}rn, Jana and Lehrb{\"a}ck, Juha},
  journal={Journal of Differential Equations},
  volume={269},
  number={9},
  pages={6602--6640},
  year={2020},
  publisher={Elsevier}
}

@article{roith2022continuum,
  title={Continuum limit of Lipschitz learning on graphs},
  author={Roith, Tim and Bungert, Leon},
  journal={Foundations of Computational Mathematics},
  pages={1--39},
  year={2022},
  publisher={Springer}
}

@article{holopainen2002singular,
  title={Singular functions on metric measure spaces},
  author={Holopainen, Ilkka and Shanmugalingam, Nageswari},
  journal={Collectanea Mathematica},
  pages={313--332},
  year={2002}
}

@article{crandall2008visit,
  title={A visit with the $\infty$-Laplace equation},
  author={Crandall, Michael G},
  journal={Calculus of variations and nonlinear partial differential equations},
  volume={1927},
  pages={75--122},
  year={2008},
  publisher={Springer}
}

@article{aronsson1986construction,
  title={Construction of singular solutions to the $p$-harmonic equation and its limit equation for $p=\infty$},
  author={Aronsson, Gunnar},
  journal={Manuscripta mathematica},
  volume={56},
  pages={135--158},
  year={1986},
  publisher={Springer}
}

@article{aronsson1984certain,
  title={On certain singular solutions of the partial differential equation $u_x^2u_{xx}+ 2u_xu_yu_{xy}+ u_y^2 u_{yy}=0$},
  author={Aronsson, Gunnar},
  journal={Manuscripta mathematica},
  volume={47},
  pages={133--151},
  year={1984},
  publisher={Springer}
}

\end{document}